\newtheorem{theorem}{Theorem}[section]
\newtheorem{lemma}[theorem]{Lemma}
\newtheorem{remark}[theorem]{Remark}
\newtheorem{thm}{Theorem}[section]
\newtheorem{corollary}{Corollary}[section]
\newtheorem{ex}[thm]{Example}
\newtheorem{rem}[thm]{Remark}
\numberwithin{equation}{section}
\definecolor{darkslategray}{rgb}{0.18, 0.31, 0.31}
\definecolor{warmblack}{rgb}{0.0, 0.26, 0.26}
\definecolor{astral}{RGB}{46,116,181}
\begin{document}
	\begin{frontmatter}
		 \title{ \textcolor{warmblack}{\bf 
  Generalized numerical radius  inequalities for certain operator matrices}}

		\author[label1]{Satyajit Sahoo}\ead{ssahoomath@gmail.com }

		\address[label1]{Department of Mathematics, 
School of Basic Sciences, Indian Institute of Technology Bhubaneswar, Odisha 752050, India.}
		
	\author[label2]{Narayan Behera}\ead{narayanbehera2004@gmail.com}

		\address[label2]{Space and Atmospheric Sciences Division, Physical Research Laboratory, Navrangpura, Ahmedabad 380009, India}


		\begin{abstract}
			\textcolor{warmblack}{
   		In this article, a series of new inequalities involving the $q$-numerical radius for $n\times n$ tridiagonal, and anti-tridiagonal operator matrices has been established. These inequalities serve to establish both lower and upper bounds for the $q$-numerical radius of operator matrices. Additionally, we developed $q$-numerical radius inequalities for $n\times n$ circulant, skew circulant, imaginary circulant, imaginary skew circulant operator matrices.
Important examples have been used to illustrate the developed inequalities. In this regard, analytical expressions and a numerical algorithm have also been employed to obtain the $q$-numerical radii.       
     We also provide a concluding section, which may lead to several new problems in this area.}
		\end{abstract}
		
		\begin{keyword}
			$q$-numerical radius; Inequality; Circulant operator matrix; Tridiagonal operator matrix.
		\end{keyword}
	\end{frontmatter}

	\section{Introduction}\label{intro}
The $q$-numerical radius of a matrix is a generalization of the classical numerical radius, where a matrix is treated as an operator acting on a Hilbert space. It is an important quantity in operator theory and functional analysis, particularly in studying the spectrum of matrices and their geometric properties. In certain cases, the $q$-numerical radius has been studied to examine how it behaves with respect to different classes of matrices. The parameter 
$q$ can control the behavior of the matrix in various ways, depending on its structure.

	The operator matrices such as circulant, reverse circulant, symmetric circulant, $k$-circulant, Toeplitz matrices etc. \cite{JZ,Davis} play a crucial role in pure as well as applied mathematical researches such as graph theory, image processing, block filtering design, signal processing, regular polygon solutions, encoding, control and system theory, network, etc. 
	The norm estimation for the operator matrices \cite{BhatiaKit,Kissin} is extensively carried out in the past and it is widely used in operator theory, quantum information theory, mathematical physics, numerical analysis, etc. The norms of some circulant type matrices were determined by various mathematicians. For instance, Li et al. \cite{LIJL}, gave four kinds of norms for circulant and left circulant matrices involving special numbers. Bose et al. \cite{BoseHS}, discussed the convergence in probability and the convergence in distribution of the spectral norms of scaled Toeplitz, circulant, reverse (left) circulant, symmetric circulant, and $k$-circulant matrices. Works on norm equalities and inequalities of special kind of operator matrices can be found in the literature \cite{AUK,BhatiaKLi,KC,KC1}. Jiang and Xu \cite{JXU} explored special cases for norm equalities and inequalities, such as usual operator norm and Schatten $p$-norms. Several norm equalities and inequalities for the circulant, skew circulant, and $w$-circulant operator matrices were studied \cite{BaniKit,JXU}.

	\par 
	Let $\mathcal{H}$ be a complex Hilbert space with inner product $\langle \cdot,\cdot\rangle$  and the corresponding norm $\|\cdot\|$.
	Let  $\mathcal{B}(\mathcal{H})$ be the $C^*$-algebra of all bounded linear operators on $\mathcal{H}$. Let $\mathbb{H}=\displaystyle\bigoplus_{i=1}^n\mathcal{H}$ be the direct sum of $n$ copies of $\mathcal{H}$. If $S_{ij}, 1\leq i, j\leq n$ are operators in $\mathcal{B(H)}$, then operator matrix $\mathbb{S}=[S_{i,j}]$ can be defined on $\mathbb{H}$ by $ \mathbb{S}x=\begin{bmatrix}
	\displaystyle\sum_{j=1}^{n}S_{1j}x_j\\
	\vdots\\
	\displaystyle\sum_{j=1}^{n}S_{nj}x_j
	\end{bmatrix}$ for every vector $x=[x_1,\dots, x_n]^T\in \mathbb{H}$. If $S_i\in \mathcal{B(H)},  i=1,\dots,n$, then their direct sum, $\displaystyle\bigoplus_{i=1}^n S_i$, (which is an $n\times n$ block diagonal operator matrix), is given by	$ \displaystyle\bigoplus_{i=1}^nS_i =\begin{bmatrix}
	S_1 & &&\\
	& S_2&&\\
	&& \ddots\\
	&&& S_n
	\end{bmatrix}.$
If $S_{ij}$ are operators in $\mathcal{B(H)}, i=1,\dots,n$, then the general $n\times n$ tridiagonal operator matrix is defined as 	$\mathbb{S}_{tri}=\begin{bmatrix}
	S_{11} & S_{12}  & O & \cdots  &   O\\
	S_{21}  & S_{22} &  S_{23}&\cdots  & O\\
	O  & S_{32} &  S_{33}&\ddots  &  O\\
	\vdots & \vdots &\ddots & \ddots & S_{n-1 n} \\
	O  & O  &\cdots&  S_{n n-1} & S_{nn}
	\end{bmatrix}$. Also, the general $n\times n$ anti-tridiagonal operator matrix is defined as 
    
$\mathbb{S}_{atri}=\begin{bmatrix}
O &\cdots  &O & S_{1n-1}  &   S_{1n}\\
\vdots & \adots & S_{2n-2}&S_{2n-1}  &S_{2n}\\
	O  & \adots &  S_{3n-2}&S_{3n-1}  &  O\\
	S_{n-11}& \adots &\adots & \adots & \vdots \\
	S_{n1}  &S_{n2}  &O& \cdots & O
	\end{bmatrix}$.
    Typical tridiagonal and anti-tridiagonal matrix have been represented graphically in Figure \ref{Fig_Trid-0}.

\begin{figure}[H]
\centering
 {\includegraphics[scale=.80]{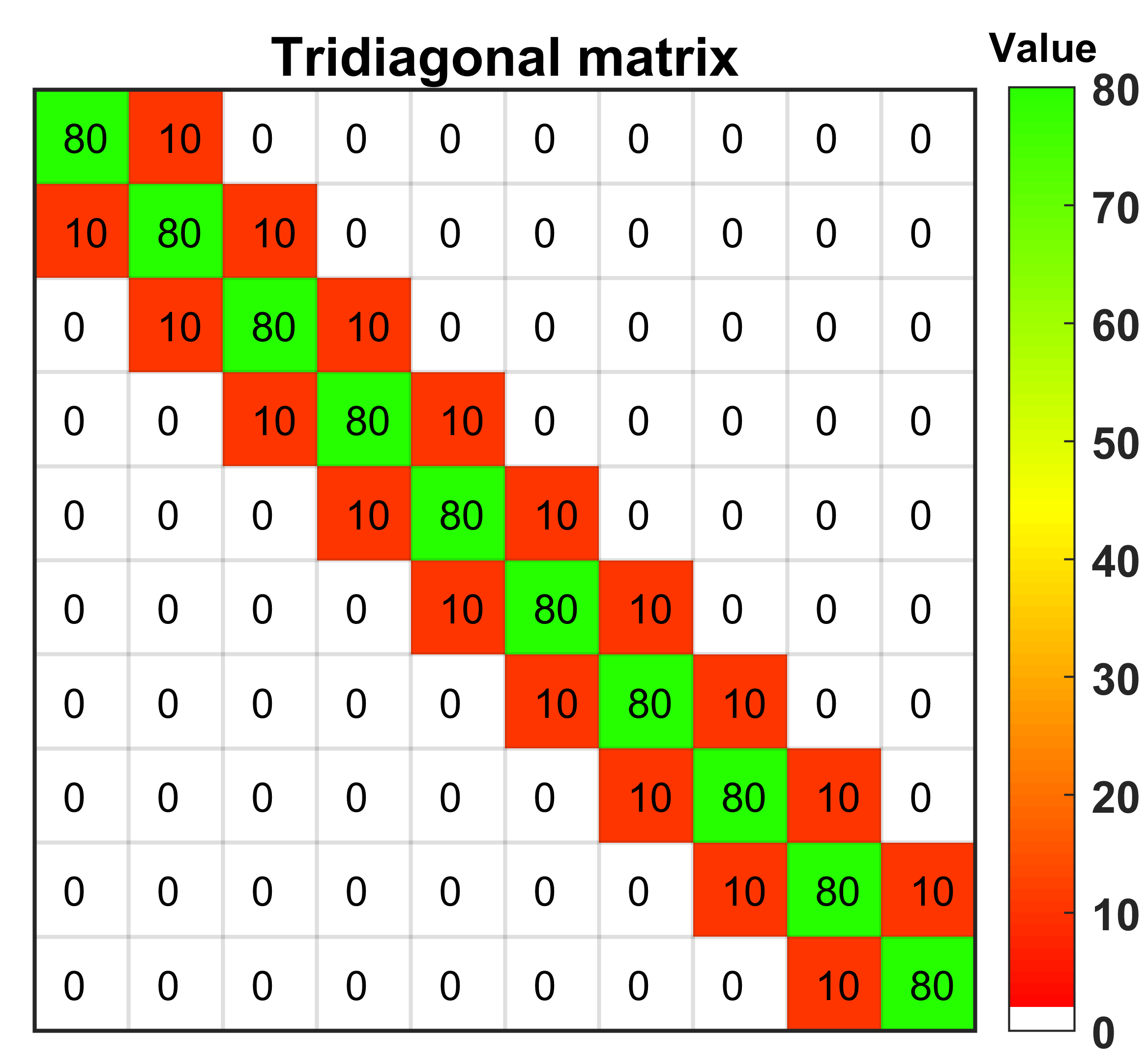}}
 \hspace{0.3cm}
 {\includegraphics[scale=.80]{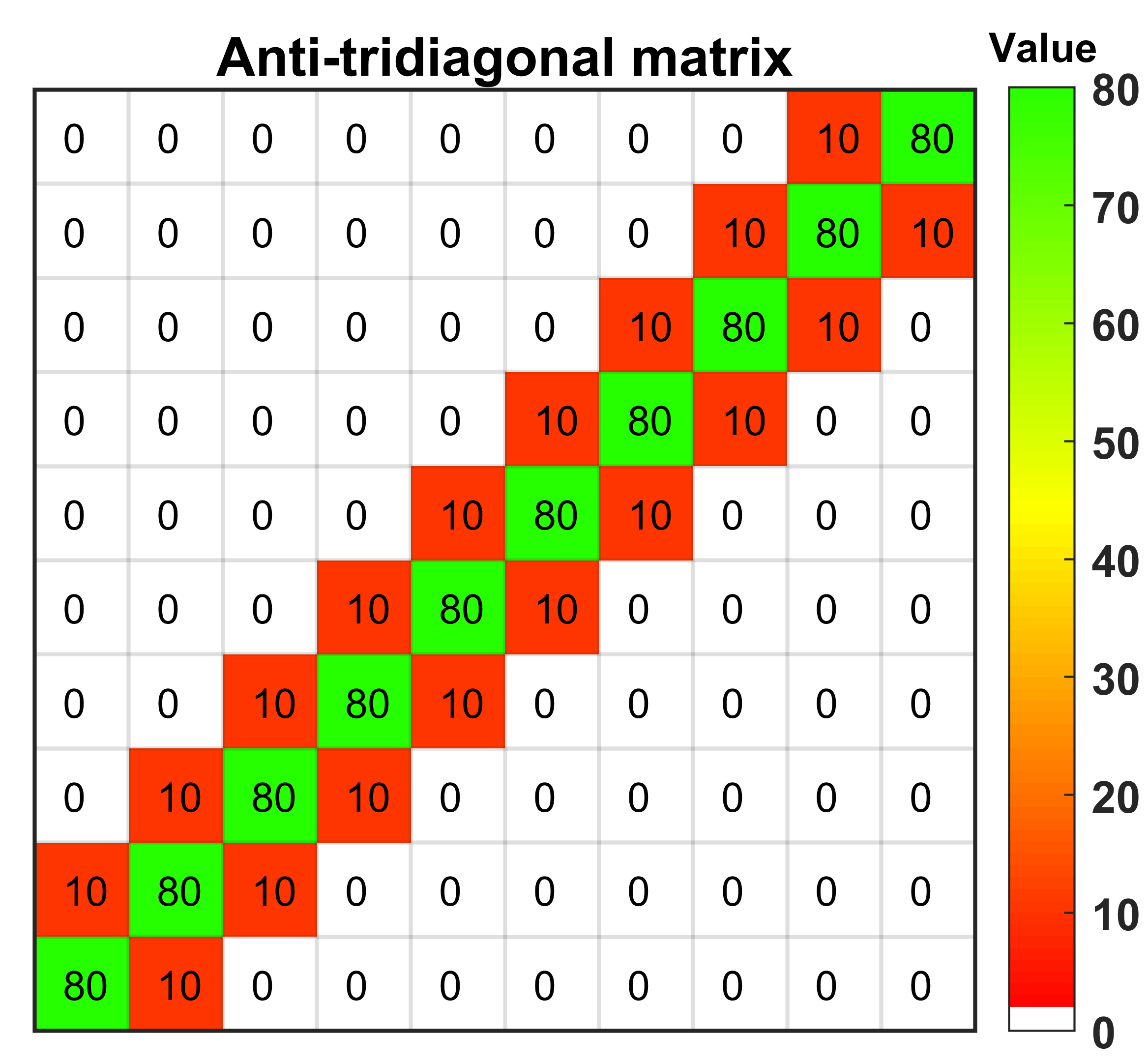}}
\caption{Graphical representation a tridiagonal and anti-tridiagonal matrix.}	 
\label{Fig_Trid-0}
\end{figure}
Similarly, if $S_i\in \mathcal{B(H)}, i=1,\dots,n$, then the circulant operator matrix $\mathbb{S}_{circ}=\mbox{circ}(S_1,\dots,S_n)$ is the $n\times n$ matrix whose first row has entries $S_1, \dots, S_n$ and the other rows are obtained by successive cyclic permutations of these entries, i.e.,
	$\mathbb{S}_{circ}=\begin{bmatrix}
	S_{1} & S_{2}  & S_{3} & \cdots  &   S_{n}\\
	S_{n}  & S_{1} &  S_{2}&\cdots  & S_{n-1}\\
	S_{n-1}  & S_{n} &  S_{1}&\ddots  &  S_{n-2}\\
	\vdots & \vdots &\ddots & \ddots & \vdots \\
	S_{2}  & S_{3}  &\cdots&  S_{n} & S_{1}
	\end{bmatrix}$. 
A typical circulant matrix has been represented graphically in Figure \ref{Fig_Circ-0}. 
 \begin{figure}[H]
\centering
{\includegraphics[scale=.80]{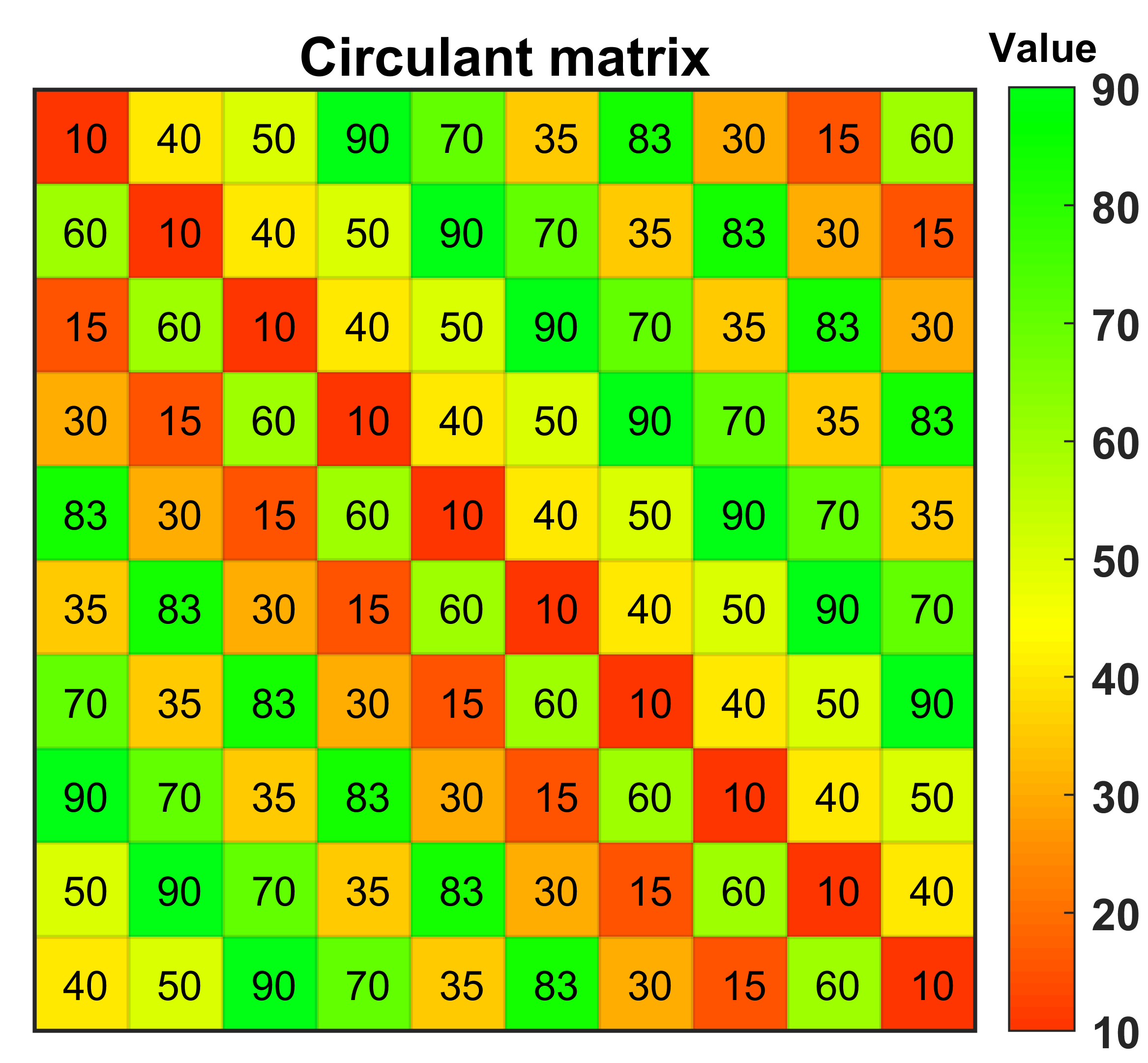}}
\caption{Graphical representation of a circulant matrix.}	 
\label{Fig_Circ-0}
\end{figure}

    The skew circulant operator matrix $\mathbb{S}_{scirc}=\mbox{scirc}(S_1,\dots,S_n)$ is the $n\times n$ circulant matrix followed by a change in sign to all the elements below the main diagonal. Thus,
	$\mathbb{S}_{scirc}=\begin{bmatrix}
	S_{1} & S_{2}  & S_{3} & \cdots  &   S_{n}\\
	-S_{n}  & S_{1} &  S_{2}&\cdots  & S_{n-1}\\
	-S_{n-1}  & -S_{n} &  S_{1}&\ddots  &  S_{n-2}\\
	\vdots & \vdots &\ddots & \ddots & \vdots \\
	-S_{2}  & -S_{3}  &\cdots& - S_{n} & S_{1}
	\end{bmatrix}$. It is well-known that every skew circulant operator matrix is unitarily equivalent to a circulant operator matrix. Details discussion on circulant, skew-circulant and their properties are given in \cite{Davis}. 
		If $S_i\in \mathcal{B(H)}, i=1,\dots,n$, then the imaginary circulant operator matrix $\mathbb{S}_{circ_i}=\mbox{circ}_i(S_1,\dots,S_n)$ is the $n\times n$ matrix whose first row has entries $S_1, \dots, S_n$ and the other rows are obtained by successive cyclic permutations of $i$-multiplies of these entries, i.e.,
	$\mathbb{S}_{circ_i}=\begin{bmatrix}
	S_{1} & S_{2}  & S_{3} & \cdots  &   S_{n}\\
	iS_{n}  & S_{1} &  S_{2}&\cdots  & S_{n-1}\\
	iS_{n-1}  & iS_{n} &  S_{1}&\ddots  &  S_{n-2}\\
	\vdots & \vdots &\ddots & \ddots & \vdots \\
	iS_{2}  & iS_{3}  &\cdots& i S_{n} & S_{1}
	\end{bmatrix}$. Every imaginary circulant operator matrix is unitarily equivalent to a circulant operator matrix. 
	The imaginary skew circulant operator matrix $\mathbb{S}_{scirc_i}=\mbox{scirc}_i(S_1,\dots,S_n)$ is the $n\times n$ imaginary circulant followed by a change in sign to all the elements below the main diagonal. Thus,
	$\mathbb{S}_{scirc_i}=\begin{bmatrix}
	S_{1} & S_{2}  & S_{3} & \cdots  &   S_{n}\\
	-iS_{n}  & S_{1} &  S_{2}&\cdots  & S_{n-1}\\
	-iS_{n-1}  & -iS_{n} &  S_{1}&\ddots  &  S_{n-2}\\
	\vdots & \vdots &\ddots & \ddots & \vdots \\
	-iS_{2}  & -iS_{3}  &\cdots& -i S_{n} & S_{1}
	\end{bmatrix}$.
 \\
 An operator $S\in\mathcal{B}(\mathcal{H})$ is said to be positive, and denoted $S\geq 0$, if $\left<Sx,x\right>\geq 0$ for all $x\in\mathcal{H},$ and is called positive definite, denoted $S>0$, if $\left<Sx,x\right>>0$ for all non zero vectors $x\in\mathcal{H}.$
The {\it numerical range} of $S\in\mathcal{B}(\mathcal{H})$ is defined as
 $ W(S)=\{\langle Sx, x \rangle: x\in \mathcal{H}, \|x\|=1
\}$  and the {\it numerical radius} of $S$, denoted by $w(S)$,  is
defined by $ w(S)=\displaystyle\sup\{|z|: z\in W(S) \}.$
It is known that the set $W(S)$ is a convex subset of the complex plane and that the numerical radius $w(\cdot)$ is a norm on $\mathcal{B}(\mathcal{H})$; being equivalent to the
usual operator norm $\|S\|=\displaystyle \sup  \{ \|Sx \|: x\in \mathcal{H}, \|x\|=1 \}.$ In fact, for
every $S \in \mathcal{B(H)}$, 
\begin{align}\label{p4100}
\frac{1}{2}\|S\|\leq w(S)\leq \|S\|.
\end{align}

The inequalities in \eqref{p4100} are sharp. If $S^2=0$, then the first inequality becomes an equality, on the other hand, the second inequality becomes an equality if $S$ is normal. In fact, for a nilpotent operator $S$  with $S^n=0$, Haagerup and Harpe \cite{HPH} showed that $w(S)\leq \|S\|\cos(\pi/(n+1))$. In particular, when $n=2$, we get the reverse inequality of the first inequality in \eqref{p4100}. The numerical radius has some significant properties, such as the power inequality:
\begin{align}\label{powerineq}
    w(S^n)\leq w^n(S)~~~\mbox{for}~~n=1,2,\dots.
\end{align}
For basic information about numerical radius, one can refer \cite{GRO} and for recent results in this direction, one can see \cite{Bhunia_Sahoo_2025, Daptari_Kittaneh_Sahoo_2024, Sababheh_Moradi_Sahoo_2024, Sahoo_Moradi_Sababheh_Acta_2024, NSD, SND1}.
In a similar way, the $q$-numerical range is defined by  
\begin{align}\label{Eq1.3}
   W_q(S)=\{\langle Sx, y \rangle: x, y\in \mathcal{H}, \|x\|=\|y\|=1, \langle x, y\rangle=q
\}, 
\end{align}
while  the {\it $q$-numerical radius} is
defined by 
\begin{align}\label{EQ1.4}
    w_q(S)=\displaystyle\sup\{|z|: z\in W_q(S) \}.
\end{align}

One can observe that the concept of the $q$-numerical radius is a generalization of the classical numerical radius, for $|q|=1$. This observation follows from the fact that equality would have to hold in the Cauchy-Schwarz inequality $|q|=|\langle x, y\rangle|\leq \|x\|\|y\|=1$, provided $|q|=1$. It is clear that $y=\lambda x$ would need to be true for some $\lambda \in \mathbb{C}, |\lambda|=1$, hence $|\langle Sx, y \rangle|=|\langle Sx, x \rangle|$. 

In 1977, Marcus and Andresen \cite{Marcus} introduced the $q$-numerical range on $n$-dimensional unitary space with an inner product. They showed that the set generated by rotating $q$-numerical range about the origin is an annulus for $q\in \mathbb{C}$ with $q\leq 1$. They also gave the inner and outer radii of this annulus for hermitian operators. In 1984, Nam-Kiu Tsing \cite{Tsing}, established the convexity of the $q$-numerical range. In 1994, C. K. Li et al. \cite{LiCK} described some elementary properties for the $q$-numerical range on finite-dimensional spaces, see also \cite{LiCKNakazato}. In 2002, M.T. Chien and H. Nakazato \cite{ChienMT} described the boundary of the $q$-numerical range of a square matrix using its Davis-Wielandt shell. In 2005, R. Rajic \cite{RajiR} considered a generalization of the $q$-numerical range. In 2007, M.T. Chien and H. Nakazato \cite{ChienMTNakazatoH} again studied the $q$-numerical radius of weighted unilateral and bilateral shift operators and computed the $q$-numerical radius of shift operators with periodic weights. In 2012, M. T. Chien, \cite{ChienMT2012} investigated the $q$-numerical radius of a weighted shift operator when its weights are in geometric sequence and periodic sequence. Recently, the authors of \cite{MMJ, Patra_Roy_2024_OAM, Kaadoud_Moulaharabbi_2024_OAM} established some upper and lower bounds for the $q$-numerical radius of bounded linear operators which generalize some classical numerical radius inequalities. Also, they have presented some useful examples to compare the sharpness of their inequalities for different values of $q\in (0, 1)$. Very recently, Stankovic et al. \cite{HSMKID} investigated some properties for the $q$-numerical radius and presented an improved version of some earlier results from \cite{MMJ}. Also, they established several $q$-numerical radius inequalities for operator matrices defined on the direct sum of Hilbert spaces. Motivated by the work of several authors we have presented $q$-numerical radius of some special type of operator matrices i. e. tridiagonal and circulant operator matrices as defined in the introduction.
 \par 

The present paper consists of a blend of a few distinct bodies: elementary observations and examples, new results, hints to other predictable applications and some original facts.
	
	In this aspect, the rest of the paper is arranged as follows. We have collected some preliminary results in Section $2$ that will be needed to prove our results. We establish certain $q$-numerical radius of $n\times n$ tri-diagonal and anti-tridiagonal operator matrices in Section $3$. In Section $4$, we present $q$-numerical radius inequalities for circulant, skew circulant, imaginary circulant, and imaginary skew circulant operator matrices. 
 Some special cases of our results have been given in this section. Finally, we end up with a conclusion section, which may spark new problems for future research interest.

	\section{ Preliminaries}
In order to make progress our research work, we need the following lemmas to prove our results. The very first lemma represents a consequence of the results obtained in \cite[Proposition 3.1]{Gau}, as well as \eqref{EQ1.4}. 
\begin{lemma}\label{Lem0000}
  Let $T, S\in {\mathcal{B}}(\mathcal{H}), q\in \bar{\mathbb{D}}$ and $\lambda \in \mathbb{C}$. Then we have following properties:
  \begin{enumerate}
      \item [\textnormal{(i)}] $w_q(\lambda T)=|\lambda|w_q(T)$.
       \item [\textnormal{(ii)}] $w_q(T+S)\leq w_q(T)+w_q(S)$.
        \item [\textnormal{(iii)}] $w_q(U^*TU) = w_q(T)$, where $U \in \mathcal{B(H)}$ is an unitary operator.
          \item [\textnormal{(iv)}] $w_{\lambda q}(T)= w_q(T)$ for all $\lambda\in \mathbb{C}$ with $|\lambda|=1$.
  \end{enumerate}
\end{lemma}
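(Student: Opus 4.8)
The plan is to work directly from the definitions \eqref{Eq1.3} and \eqref{EQ1.4}, establishing in each case the corresponding identity (or inclusion) at the level of the $q$-numerical \emph{range} $W_q(\cdot)$ and then passing to the supremum of moduli. Throughout, call a pair $(x,y)\in\mathcal{H}\times\mathcal{H}$ \emph{admissible} if $\|x\|=\|y\|=1$ and $\langle x,y\rangle=q$; since $q\in\bar{\mathbb{D}}$, such pairs exist.

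For (i), if $\lambda\neq 0$ then $\langle \lambda T x,y\rangle=\lambda\langle Tx,y\rangle$ for every admissible pair $(x,y)$, so $W_q(\lambda T)=\lambda W_q(T)$ and hence $w_q(\lambda T)=|\lambda|\,w_q(T)$; the case $\lambda=0$ is immediate. For (ii), fix an admissible pair $(x,y)$; then by the triangle inequality and \eqref{EQ1.4}, $|\langle (T+S)x,y\rangle|\le|\langle Tx,y\rangle|+|\langle Sx,y\rangle|\le w_q(T)+w_q(S)$, and taking the supremum over admissible pairs gives the claim.

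For (iii), the key observation is that the substitution $(x,y)\mapsto(Ux,Uy)$ maps the set of admissible pairs bijectively onto itself, because $U$ is unitary — it preserves norms and inner products, and $U^{-1}=U^{*}$ is again unitary. Since $\langle U^{*}TU x,y\rangle=\langle TUx,Uy\rangle$, this yields $W_q(U^{*}TU)=W_q(T)$ and therefore equality of the radii. For (iv), the only part requiring a genuine trick, I would use a rotation in the second argument: given $|\lambda|=1$ and an admissible pair $(x,y)$ for $q$, the pair $(x,\bar\lambda y)$ is admissible for $\lambda q$, since $\|\bar\lambda y\|=1$ and $\langle x,\bar\lambda y\rangle=\lambda\langle x,y\rangle=\lambda q$, while $\langle Tx,\bar\lambda y\rangle=\lambda\langle Tx,y\rangle$. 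Running the same argument with $\bar\lambda$ in place of $\lambda$ gives the reverse inclusion, so $W_{\lambda q}(T)=\lambda W_q(T)$ and hence $w_{\lambda q}(T)=|\lambda|\,w_q(T)=w_q(T)$.

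I do not anticipate any real obstacle: all four items reduce to bookkeeping with the defining constraints $\|x\|=\|y\|=1$ and $\langle x,y\rangle=q$. The only point needing a moment's thought is (iv), where one must notice that the rotation has to be applied to $y$ (not to $x$ or to $T$) in order to move the constraint from $q$ to $\lambda q$ while leaving $|\langle Tx,y\rangle|$ unchanged; this is exactly the content of \cite[Proposition 3.1]{Gau}, so alternatively one could simply invoke that reference together with \eqref{EQ1.4}.
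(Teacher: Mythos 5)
Your proof is correct, and it is in fact more detailed than what the paper provides: the paper gives no argument at all for this lemma, merely remarking that it is ``a consequence of the results obtained in \cite[Proposition 3.1]{Gau}, as well as \eqref{EQ1.4}.'' Your direct verification from the definitions \eqref{Eq1.3}--\eqref{EQ1.4} supplies exactly the bookkeeping that citation hides, and each step checks out: (i) and (ii) are immediate; in (iii) the bijectivity of $(x,y)\mapsto(Ux,Uy)$ on admissible pairs is the right observation (surjectivity of $U$ is what makes it onto, which you correctly note via $U^{-1}=U^{*}$); and in (iv) the rotation $(x,y)\mapsto(x,\bar\lambda y)$ gives $\langle x,\bar\lambda y\rangle=\lambda q$ and $W_{\lambda q}(T)=\lambda W_q(T)$, with the two-sided inclusion argument closing the loop. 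The only caveat worth recording is that your computation in (iv) relies on the inner product being conjugate-linear in the second argument; under the opposite convention one would use $\lambda y$ instead of $\bar\lambda y$, but the conclusion is unaffected. What your approach buys is self-containedness; what the paper's citation buys is brevity. Either is acceptable here.
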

Motivated by the work of Hirzallah et al. \cite[Lemma 2.1]{TY}, Stankovic et al. \cite{HSMKID} extended the same result to $q$-numerical radius setting is mentioned below. Same result for semi-Hilbert setting one can see \cite{PINTU, Nirmal2}.
\begin{lemma}\label{lem0001}\textnormal{\cite[Lemma 5.2]{HSMKID}} 
		Let $T, S\in {\mathcal{B}}(\mathcal{H}), q\in \bar{\mathbb{D}}$ and $\theta \in \mathbb{R}$. Then 
		\begin{enumerate}
			\item [\textnormal{(i)}] $w_q\left(\begin{bmatrix}
			O & T\\
			S & O
			\end{bmatrix}\right)=w_q\left(\begin{bmatrix}
			O & S\\
			T & O
			\end{bmatrix}\right).$\\
			\item [\textnormal{(ii)}] $w_q\left(\begin{bmatrix}
			O & T\\
			e^{i\theta}S & O
			\end{bmatrix}\right)=w_q\left(\begin{bmatrix}
			O & T\\
			S & O
			\end{bmatrix}\right)$ for~any~$\theta\in\mathbb{R}$.\\
			\item [\textnormal{(iii)}]  $w_q\left(\begin{bmatrix}
			T & O\\
			O & S
			\end{bmatrix}\right)=w_q\left(\begin{bmatrix}
			S & O\\
			O & T
			\end{bmatrix}\right).$
		\end{enumerate}
	\end{lemma}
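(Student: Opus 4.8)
The plan is to derive all three equalities from Lemma~\ref{Lem0000}, by conjugating one of the displayed block operators into the other with a suitable unitary on $\mathcal{H}\oplus\mathcal{H}$ and then applying the unitary invariance $w_q(U^{*}TU)=w_q(T)$ of Lemma~\ref{Lem0000}(iii) --- applied with the ambient Hilbert space taken to be $\mathcal{H}\oplus\mathcal{H}$, so that the $2\times2$ block operators and the conjugating unitary play the roles of $T$ and $U$ there. Part (ii) will use, in addition, the homogeneity $w_q(\lambda T)=|\lambda|\,w_q(T)$ of Lemma~\ref{Lem0000}(i).

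For parts (i) and (iii) I would take the flip unitary $J=\begin{bmatrix} O & I\\ I & O\end{bmatrix}$, which is unitary because $J^{*}=J=J^{-1}$. A one-line block multiplication gives
\begin{align*}
J^{*}\begin{bmatrix} O & T\\ S & O\end{bmatrix}J=\begin{bmatrix} O & S\\ T & O\end{bmatrix}\qquad\text{and}\qquad J^{*}\begin{bmatrix} T & O\\ O & S\end{bmatrix}J=\begin{bmatrix} S & O\\ O & T\end{bmatrix},
\end{align*}
so Lemma~\ref{Lem0000}(iii) yields (i) and (iii) at once.

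Part (ii) is the one place I expect to have to think. Conjugating $\begin{bmatrix} O & aT\\ bS & O\end{bmatrix}$ by a diagonal unitary $\operatorname{diag}(e^{i\phi}I,e^{i\psi}I)$ produces $\begin{bmatrix} O & e^{i(\psi-\phi)}aT\\ e^{i(\phi-\psi)}bS & O\end{bmatrix}$, which leaves the \emph{product} of the two off-diagonal phases unchanged; hence diagonal conjugation by itself cannot strip a phase off a single block. The fix is to symmetrize first using homogeneity: since $\begin{bmatrix} O & T\\ e^{i\theta}S & O\end{bmatrix}=e^{i\theta/2}\begin{bmatrix} O & e^{-i\theta/2}T\\ e^{i\theta/2}S & O\end{bmatrix}$, Lemma~\ref{Lem0000}(i) gives $w_q\!\left(\begin{bmatrix} O & T\\ e^{i\theta}S & O\end{bmatrix}\right)=w_q\!\left(\begin{bmatrix} O & e^{-i\theta/2}T\\ e^{i\theta/2}S & O\end{bmatrix}\right)$, and conjugating this symmetric-phase matrix by the diagonal unitary $U=\begin{bmatrix} I & O\\ O & e^{i\theta/2}I\end{bmatrix}$ one checks that $U^{*}\begin{bmatrix} O & e^{-i\theta/2}T\\ e^{i\theta/2}S & O\end{bmatrix}U=\begin{bmatrix} O & T\\ S & O\end{bmatrix}$; Lemma~\ref{Lem0000}(iii) then closes the argument. (Equivalently, conjugating $\begin{bmatrix} O & T\\ S & O\end{bmatrix}$ by $\operatorname{diag}(e^{i\phi}I,I)$ and factoring out $e^{-i\phi}$ shows $w_q\!\left(\begin{bmatrix} O & T\\ S & O\end{bmatrix}\right)=w_q\!\left(\begin{bmatrix} O & T\\ e^{2i\phi}S & O\end{bmatrix}\right)$ for all real $\phi$, and one sets $2\phi=\theta$.) Apart from this phase bookkeeping everything is routine; the only side check needed is that $J$ and $U$ are genuinely unitary on $\mathcal{H}\oplus\mathcal{H}$, which is clear since their blocks are $O$, $I$, and unimodular scalar multiples of $I$.
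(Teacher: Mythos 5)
Your argument is correct in all three parts: the block multiplications with the flip unitary $J$ check out for (i) and (iii), and the two-step argument for (ii) --- factor out the global phase $e^{i\theta/2}$ using Lemma~\ref{Lem0000}(i), then conjugate the symmetric-phase matrix by $\operatorname{diag}(I,e^{i\theta/2}I)$ and invoke Lemma~\ref{Lem0000}(iii) --- is exactly right. Note, however, that the paper itself supplies no proof of this lemma: it is quoted verbatim from \cite[Lemma 5.2]{HSMKID}, so there is no in-paper argument to compare yours against. Your derivation is the standard one, identical in structure to the proof of the classical numerical radius analogue in \cite[Lemma 2.1]{TY}, and it transfers to the $q$-setting precisely because Lemma~\ref{Lem0000} records that $w_q$ inherits absolute homogeneity and unitary invariance. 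Your side remark --- that conjugation by a diagonal unitary preserves the product of the two off-diagonal phases, so it cannot by itself strip a phase from a single block and the homogeneity step is genuinely needed --- is a correct and worthwhile observation that is often glossed over; the only thing I would add is an explicit sentence that Lemma~\ref{Lem0000}(iii) is being applied on the Hilbert space $\mathcal{H}\oplus\mathcal{H}$, which you do flag. No gaps.
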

	
	The following result is from \cite[Theorem 1.5]{HSMKID}.
	\begin{lemma}\label{Lemma1.3}\textnormal{\cite[Theorem 1.5]{HSMKID}}
		Let $(\mathcal{H}_n)_{n\in \mathbf{N}}$ be a sequence of Hilbert spaces and let $ T_n \in \mathcal{B}(\mathcal{H}_n)$ for all $n\in \mathbf{N}$. If $q\in \Bar{\mathbb{D}}\setminus \{0\}$. Then 
		$$\sup_{n\in \mathbf{N}}w_q(T_n)\leq w_q \left(\bigoplus_{n=1}^{+\infty} T_n \right) \leq \frac{|q|+2\sqrt{1-|q|^2}}{|q|}\sup_{n\in \mathbf{N}}w_q(T_n) .$$
	\end{lemma}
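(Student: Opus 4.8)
The plan is to prove the two inequalities separately. Throughout write $T=\bigoplus_{n}T_n$, acting on $\bigoplus_n\mathcal{H}_n$. For the lower bound I would use coordinate embeddings: given an index $n$ and unit vectors $x_n,y_n\in\mathcal{H}_n$ with $\langle x_n,y_n\rangle=q$, the block vectors $X,Y$ having $x_n,y_n$ in the $n$-th slot and zero elsewhere are unit vectors in $\bigoplus_k\mathcal{H}_k$ with $\langle X,Y\rangle=\langle x_n,y_n\rangle=q$ and $\langle TX,Y\rangle=\langle T_n x_n,y_n\rangle$. Hence $W_q(T_n)\subseteq W_q(T)$, so $w_q(T_n)\le w_q(T)$ for every $n$, and taking the supremum over $n$ gives $\sup_n w_q(T_n)\le w_q(T)$.

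For the upper bound the mechanism is two elementary facts, valid for an arbitrary bounded operator $S$ and $q\in\bar{\mathbb{D}}\setminus\{0\}$, both obtained by splitting the constraint vector orthogonally: (a) $w(S)\le\frac1{|q|}w_q(S)$, and (b) $w_q(S)\le|q|\,w(S)+\sqrt{1-|q|^2}\,\|S\|$. For (b): any unit vector $v$ with $\langle u,v\rangle=q$ decomposes as $v=\bar q\,u+\sqrt{1-|q|^2}\,z$ with $z$ a unit vector orthogonal to $u$ (the case $|q|=1$ forces $v=\bar q u$ and is trivial), whence $\langle Su,v\rangle=q\langle Su,u\rangle+\sqrt{1-|q|^2}\langle Su,z\rangle$; the triangle inequality together with $|\langle Su,u\rangle|\le w(S)$, $|\langle Su,z\rangle|\le\|S\|$, and a supremum over the admissible pairs $u,v$ gives (b). For (a): fix a unit vector $u$, choose any unit $z$ orthogonal to $u$, and set $v_\theta=\bar q\,u+\sqrt{1-|q|^2}\,e^{i\theta}z$; each $v_\theta$ is a unit vector with $\langle u,v_\theta\rangle=q$, and since $\langle Su,v_\theta\rangle=q\langle Su,u\rangle+\sqrt{1-|q|^2}\,e^{-i\theta}\langle Su,z\rangle$, choosing $\theta$ so that the two summands have the same argument gives $|q|\,|\langle Su,u\rangle|\le|\langle Su,v_\theta\rangle|\le w_q(S)$; a supremum over $u$ proves (a). (These arguments need the ambient space to have dimension at least $2$; this is automatic for $\bigoplus_n\mathcal{H}_n$, and for each individual summand it is forced once $|q|<1$, since otherwise $W_q(T_n)$ would be empty.)

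Finally I would combine (a) and (b) with the standard block estimates $w(T)=\sup_n w(T_n)$ and $\|T\|=\sup_n\|T_n\|$, of which only the $\le$ directions are needed, each being a one-line convexity, respectively norm, estimate for block vectors. Applying (b) to $S=T$ and then these estimates gives $w_q(T)\le|q|\sup_n w(T_n)+\sqrt{1-|q|^2}\sup_n\|T_n\|$. Now (a) gives $\sup_n w(T_n)\le\frac1{|q|}\sup_n w_q(T_n)$, while combining (a) with the inequality $\|T_n\|\le2w(T_n)$ from \eqref{p4100} gives $\sup_n\|T_n\|\le\frac2{|q|}\sup_n w_q(T_n)$. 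Substituting these two bounds yields $w_q(T)\le\bigl(1+\tfrac{2\sqrt{1-|q|^2}}{|q|}\bigr)\sup_n w_q(T_n)=\tfrac{|q|+2\sqrt{1-|q|^2}}{|q|}\sup_n w_q(T_n)$, the asserted upper bound.

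The step I expect to be the main obstacle is inequality (b): it is the one that linearizes the inner-product constraint $\langle u,v\rangle=q$ via the decomposition $v=\bar q\,u+\sqrt{1-|q|^2}\,z$, and it is this decomposition — together with the crude but adequate estimate $\|S\|\le2w(S)$ — that produces the exact shape $|q|+2\sqrt{1-|q|^2}$ of the constant. Inequality (a) is comparatively routine and serves only to translate the bounds on the blocks from $w$ back to $w_q$, while the direct-sum formulas for $w$ and $\|\cdot\|$ are classical.
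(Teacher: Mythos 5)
The paper does not prove this lemma at all: it is imported verbatim as \cite[Theorem 1.5]{HSMKID}, so there is no in-paper proof to compare against. Your argument is correct and reconstructs the standard (and, judging by the exact shape of the constant, the intended) route: the coordinate embedding gives $W_q(T_n)\subseteq W_q(T)$ and hence the lower bound, while the orthogonal decomposition $v=\bar q\,u+\sqrt{1-|q|^2}\,z$ yields the two transfer inequalities $|q|\,w(S)\le w_q(S)\le |q|\,w(S)+\sqrt{1-|q|^2}\,\|S\|$, which combined with $w\bigl(\bigoplus_n T_n\bigr)=\sup_n w(T_n)$, $\bigl\|\bigoplus_n T_n\bigr\|=\sup_n\|T_n\|$ and $\|S\|\le 2\,w(S)$ produce exactly $\frac{|q|+2\sqrt{1-|q|^2}}{|q|}$. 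Your handling of the edge cases ($|q|=1$ and the need for an orthogonal unit vector $z$) is also adequate.
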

 As a special case of the above result we have the following lemma that represents a generalization of  Hirzallah {\it et al.} \cite[Lemma 2.1 (a)]{TY}.
 \begin{lemma}\label{Lemma1.4}\textnormal{\cite[Corollary 5.1]{HSMKID}}
 Let $T, S \in \mathcal{B}(\mathcal{H})$ and $q\in (0, 1]$. Then
      \begin{align*}
            \max\{w_q(T), w_q(S)\}\leq w_q\left(\begin{bmatrix}
			T& 0\\
			0 & S
			\end{bmatrix}\right)\leq \frac{q+2\sqrt{1-q^2}}{q}\max\{w_q(T), w_q(S)\}.
        \end{align*}
 \end{lemma}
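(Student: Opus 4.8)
The plan is to obtain this as the two-term instance of Lemma \ref{Lemma1.3}. Write $A=\begin{bmatrix} T & O\\ O & S\end{bmatrix}\in\mathcal{B}(\mathcal{H}\oplus\mathcal{H})$. The lower bound is elementary and does not really need Lemma \ref{Lemma1.3}: for unit vectors $x,y\in\mathcal{H}$ with $\langle x,y\rangle=q$, the vectors $X=(x,0)^T$ and $Y=(y,0)^T$ in $\mathcal{H}\oplus\mathcal{H}$ are again unit vectors with $\langle X,Y\rangle=\langle x,y\rangle=q$, and $\langle AX,Y\rangle=\langle Tx,y\rangle$; hence $W_q(T)\subseteq W_q(A)$ and $w_q(T)\le w_q(A)$. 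Taking instead $X=(0,x)^T$, $Y=(0,y)^T$ gives $w_q(S)\le w_q(A)$, so $\max\{w_q(T),w_q(S)\}\le w_q(A)$. (This is of course also the left-hand inequality of Lemma \ref{Lemma1.3} for the relevant sequence.)

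For the upper bound I would apply Lemma \ref{Lemma1.3} to the sequence $T_1=T$, $T_2=S$, so that $\bigoplus_{n\ge1}T_n=A$ and $\sup_n w_q(T_n)=\max\{w_q(T),w_q(S)\}$. Strictly speaking Lemma \ref{Lemma1.3} is phrased for families indexed by $\mathbf{N}$; I would extend to such a family by setting $\mathcal{H}_n=\{0\}$ and $T_n=0$ for $n\ge3$, which changes neither the direct sum $\bigoplus_{n\ge1}T_n$ nor the supremum $\sup_n w_q(T_n)$. (Equivalently, the proof of \cite[Theorem 1.5]{HSMKID} goes through verbatim for finite families, which is precisely \cite[Corollary 5.1]{HSMKID}.) Since $q\in(0,1]$ we have $|q|=q$ and $\sqrt{1-|q|^2}=\sqrt{1-q^2}$, so Lemma \ref{Lemma1.3} gives $w_q(A)\le\frac{q+2\sqrt{1-q^2}}{q}\max\{w_q(T),w_q(S)\}$. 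Combining the two halves yields the claim; note that at $q=1$ the constant $\frac{q+2\sqrt{1-q^2}}{q}$ collapses to $1$, recovering the classical equality $w(T\oplus S)=\max\{w(T),w(S)\}$ of \cite[Lemma 2.1(a)]{TY}.

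The only genuinely delicate point is the identification behind the upper bound, i.e. making sure the ``padding'' summands are actually harmless. This is safe for the trivial Hilbert space $\{0\}$, but it would be \emph{false} to pad with the zero operator on a nonzero copy of $\mathcal{H}$: for $q<1$ one can have $w_q\big(A\oplus 0_{\mathcal{H}}\big)>w_q(A)$ — already for $A=[1]$ on $\mathbb{C}$, choosing $x=y=(\sqrt{(1+q)/2},\,\sqrt{(1-q)/2})$ and $y=(\sqrt{(1+q)/2},\,-\sqrt{(1-q)/2})$ shows $w_q(A\oplus 0)\ge(1+q)/2>q=w_q(A)$. So the clean route is to invoke the finite-family form of Lemma \ref{Lemma1.3} directly (as recorded in \cite[Corollary 5.1]{HSMKID}) rather than trying to reduce the infinite statement to the finite one by an ad hoc embedding.
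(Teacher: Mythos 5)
Your proposal is correct and follows essentially the same route as the paper, which offers no separate proof but presents this lemma precisely as the two-term specialization of Lemma \ref{Lemma1.3} (citing \cite[Corollary 5.1]{HSMKID}). Your elementary inclusion argument $W_q(T)\subseteq W_q(A)$ for the lower bound and your caution about padding with the trivial space $\{0\}$ rather than a nonzero copy of $\mathcal{H}$ (where $w_q(A\oplus 0_{\mathcal{H}})$ can indeed exceed $w_q(A)$ for $q<1$) are both sound and, if anything, more careful than the paper's bare citation.
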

 The following lemma follows from \cite{Nakazato} that will be needed in our purpose. 
\begin{lemma}\label{lem1}\textnormal{\cite{Nakazato}} 
    Suppose $0\leq q\leq 1$ and $T\in M_2(\mathbb{C})$. Then $T$ is unitarily similar to $e^{it}\begin{pmatrix}
        \gamma &a\\
        b & \gamma
    \end{pmatrix}$ for some $0\leq t \leq 2\pi $ and $0\leq b\leq a$. Also
    \begin{align*}
        W_q(T)=e^{it}\left\{\gamma q+r\left((c+pd)\cos(s)+i(d+pc)\sin (s)\right): 0\leq r\leq 1, 0\leq s\leq 2\pi \right\},
    \end{align*}
   with $c=\frac{a+b}{2}, d=\frac{a-b}{2}~\mbox{and}~ p=\sqrt{1-q^2}. $ 
\end{lemma}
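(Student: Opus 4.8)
\noindent\emph{Proof proposal.} The statement splits into a unitary canonical form and an explicit description of $W_q$. For the canonical form, put $\mu=\tfrac12\operatorname{tr}T$; the eigenvalues of $T-\mu I$ are a pair $\pm\nu$, both lying in the convex set $W(T-\mu I)$, so $0\in W(T-\mu I)$. Pick a unit $x$ with $\langle(T-\mu I)x,x\rangle=0$ and complete it to an orthonormal basis $\{x,y\}$ of $\mathbb{C}^2$; since $\operatorname{tr}(T-\mu I)=0$ we also get $\langle(T-\mu I)y,y\rangle=0$, so with $U=[\,x\ y\,]$ we have $U^*TU=\begin{pmatrix}\mu&u\\ v&\mu\end{pmatrix}$. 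Conjugating by $\operatorname{diag}(1,e^{i\psi})$ sends $(u,v)\mapsto(ue^{i\psi},ve^{-i\psi})$ and extracting a scalar $e^{it}$ sends it to $(ue^{i(t+\psi)},ve^{i(t-\psi)})$; taking $t=-\tfrac12(\arg u+\arg v)$ and $\psi=\tfrac12(\arg v-\arg u)$ makes both off‑diagonal entries nonnegative real (equal to $|u|,|v|$), and a possible swap by $\begin{pmatrix}0&1\\1&0\end{pmatrix}$ orders them $0\le b\le a$. Hence $T\sim_u e^{it}(\gamma I+N)$ with $N=\begin{pmatrix}0&a\\ b&0\end{pmatrix}$ and $\gamma=\mu e^{-it}\in\mathbb{C}$ (the constant $\gamma$ need not be real, and $e^{it}$ is forced only up to the usual ambiguity).

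For the reduction, from $W_q(VSV^*)=W_q(S)$ for unitary $V$, from $W_q(\lambda S)=\lambda W_q(S)$, and from $\langle(\gamma I+N)x,y\rangle=\gamma q+\langle Nx,y\rangle$ whenever $\langle x,y\rangle=q$, we get $W_q(T)=e^{it}\bigl(\gamma q+W_q(N)\bigr)$. So it suffices to prove that $W_q(N)$ is the filled ellipse with semi‑axes $A_1:=c+pd$ along $\mathbb{R}$ and $A_2:=d+pc$ along $i\mathbb{R}$ (note $A_1\ge A_2\ge0$ since $p\le1$ and $a\ge b$), i.e.\ the set $\{r((c+pd)\cos s+i(d+pc)\sin s):0\le r\le1,\ 0\le s\le2\pi\}$.

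The main tool: for $q\in[0,1]$, writing an admissible pair as $x=z$, $y=qz+pw$ with $z\perp w$, $\|z\|=\|w\|=1$, one obtains $W_q(A)=\{q\langle Az,z\rangle+p\langle Az,w\rangle:\|z\|=\|w\|=1,\ z\perp w\}$; in $\mathbb{C}^2$ the unit vector $w$ runs over the circle in the line $z^\perp$, so $\langle Az,w\rangle$ runs over the whole circle of radius $\sqrt{\|Az\|^2-|\langle Az,z\rangle|^2}$, whence
\[
W_q(A)=\bigl\{\,q\mu+p\,e^{i\psi}\sqrt{\rho-|\mu|^2}:\ (\mu,\rho)\in DW(A),\ \psi\in[0,2\pi)\,\bigr\},
\]
where $DW(A)=\{(\langle Az,z\rangle,\|Az\|^2):\|z\|=1\}$ is the Davis--Wielandt shell. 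With $z=(\cos\tfrac\chi2,e^{i\omega}\sin\tfrac\chi2)$ one computes $\langle Nz,z\rangle=c\sin\chi\cos\omega+i\,d\sin\chi\sin\omega$, $\|Nz\|^2=\tfrac{a^2+b^2}2-\tfrac{a^2-b^2}2\cos\chi$, and $\rho-|\mu|^2=|a\sin^2\tfrac\chi2\,e^{i\omega}-b\cos^2\tfrac\chi2\,e^{-i\omega}|^2\ge0$; eliminating $\chi,\omega$ shows $DW(N)$ is the ellipsoid in $\mathbb{C}\times\mathbb{R}$ with centre $(0,\tfrac{a^2+b^2}2)$ and semi‑axes $c,\,d,\,\tfrac{a^2-b^2}2$ (degenerating to an ellipse if $a=b$).

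The endgame is to compute the image of this ellipsoid under $(\mu,\rho)\mapsto\{q\mu+pe^{i\psi}\sqrt{\rho-|\mu|^2}\}$. Since $W_q(N)$ is compact and convex (Tsing), it suffices to show its support function $h(\theta)=\max_{(\mu,\rho)\in DW(N)}\bigl(q\operatorname{Re}(e^{-i\theta}\mu)+p\sqrt{\rho-|\mu|^2}\bigr)$ equals $\sqrt{A_1^2\cos^2\theta+A_2^2\sin^2\theta}$ — equivalently, that $X+iY\in W_q(N)$ exactly when the paraboloid $(x-qX)^2+(y-qY)^2=p^2(\rho-X^2-Y^2)$ meets the ellipsoid $DW(N)$ (boundary points $\leftrightarrow$ tangency), which after eliminating $x,y,\rho$ collapses to $\tfrac{X^2}{A_1^2}+\tfrac{Y^2}{A_2^2}\le1$. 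This elimination (equivalently, the two‑variable optimization defining $h(\theta)$) is the one genuine obstacle; as a sanity check, the optimization at $\theta=0$ has maximizer $\sin^2\tfrac\chi2=\tfrac{1+p}2$, $\omega=0$ with value $q^2c+pd+p^2c=c+pd=A_1$, at $\theta=\tfrac\pi2$ it yields $d+pc=A_2$, and the endpoints $q=1$ (Elliptical Range Theorem: $W(N)$ is the ellipse with semi‑axes $c,d$) and $q=0$ (disc of radius $\|N\|=a$) match. Combining with $W_q(T)=e^{it}(\gamma q+W_q(N))$ gives the stated parametrization.
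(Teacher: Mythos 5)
First, note that the paper gives no proof of this lemma at all: it is quoted from Nakazato's article on the $C$-numerical range of a $2\times 2$ matrix, so your argument can only be judged on its own merits. The reductions you carry out are correct and cleanly done: the trace-averaging trick putting $T$ into the form $e^{it}(\gamma I+N)$ with $N=\begin{pmatrix}0&a\\ b&0\end{pmatrix}$ and $0\le b\le a$ (up to an immaterial sign convention in your choice of $t$); the identity $W_q\bigl(e^{it}(\gamma I+N)\bigr)=e^{it}\bigl(\gamma q+W_q(N)\bigr)$; the parametrization $y=qx+pw$ giving $W_q(A)=\{q\mu+p e^{i\psi}\sqrt{\rho-|\mu|^2}:(\mu,\rho)\in DW(A),\ \psi\in[0,2\pi)\}$; and the identification of $DW(N)$ as the ellipsoid surface centred at $(0,\tfrac{a^2+b^2}{2})$ with semi-axes $c$, $d$, $\tfrac{a^2-b^2}{2}$. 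All of these steps check out.

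But the proof is not finished, and you say so yourself. The assertion that the image of this ellipsoid under $(\mu,\rho)\mapsto q\mu+pe^{i\psi}\sqrt{\rho-|\mu|^2}$ is exactly the filled ellipse with semi-axes $A_1=c+pd$ and $A_2=d+pc$ is the entire content of the lemma, and you do not prove it. Evaluating the support function at $\theta=0$ and $\theta=\pi/2$ and checking the degenerate cases $q\in\{0,1\}$ is consistent with the claim but cannot establish it: a compact convex set is determined by its support function in \emph{all} directions, and infinitely many convex bodies agree with the stated ellipse at those two directions. The general-$\theta$ maximization of $q\sin\chi\,(c\cos\omega\cos\theta+d\sin\omega\sin\theta)+p\,\bigl|a\sin^2\tfrac{\chi}{2}\,e^{i\omega}-b\cos^2\tfrac{\chi}{2}\,e^{-i\omega}\bigr|$ over $(\chi,\omega)$ --- equivalently the tangency elimination you sketch, where incidentally the incidence condition should read $(X-qx)^2+(Y-qy)^2=p^2(\rho-x^2-y^2)$ with $(x,y,\rho)$ the point on the shell rather than with the roles of $(x,y)$ and $(X,Y)$ swapped --- must actually be carried out; it is precisely the computation that constitutes Nakazato's theorem. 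As written, you have a correct reduction to the right target together with evidence that the target is plausible, not a proof.
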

\section{$q$-numerical radius inequalities for $n\times n$ tridiagonal and anti-tridiagonal operator matrices}
In this section, inspired by the works of Bani-Domi et al. \cite{BaniKitShat}, Kittaneh et al. \cite{KITSAT} and Sahoo et al. \cite{Sahoo_Moradi_Sababheh_Acta_2024},  our key goal is to investigate the various properties of the $q$-numerical radius with the purpose of obtaining stronger results. We mainly discuss $q$-numerical radius inequalities for special $n\times n$ tridiagonal, and anti-tridiagonal operator matrices. Note that for classical numerical radius, equality holds, whereas our results for $q$-numerical radius: both upper and lower bounds exists. Now, we are in a position to provide a quick Theorem that represents $q$ numerical radius of $n\times n$ tri-diagonal operator matrix.

\begin{theorem}\label{tridiagonalmatrix}
	Let $T, S \in \mathcal{B}(\mathcal{H})$, $q\in (0, 1]$ and $\mathbb{P}=\begin{bmatrix}
	T &S  & O & \cdots  &  O\\
	S  & T &  S&\cdots  & \vdots\\
	O  & S &  T&\ddots  &  O\\
	\vdots & \vdots &\ddots & \ddots & S \\
	O  & O  &\cdots&  S & T
	\end{bmatrix}$ be an $n\times n$ tridiagonal operator matrix in $ \mathcal{B}(\mathcal{H}^{(n)})$.  Then 
	\begin{align*}
	\max \left\{w_q\left(T+\left(2\cos \frac{k\pi}{n+1}\right)S\right)\right\}_{k=1}^n\leq w_q(\mathbb{P})\leq \frac{q+2\sqrt{1-q^2}}{q}\max\left\{ w_q\left(T+\left(2\cos\frac{k\pi}{n+1}\right)S\right): k= 1,\dots, n \right\}.
	\end{align*}	
\end{theorem}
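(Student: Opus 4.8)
The plan is to unitarily block-diagonalize $\mathbb{P}$ and then invoke the direct-sum estimates from Section~2. Identifying $\mathcal{H}^{(n)}$ with $\mathbb{C}^n\otimes\mathcal{H}$, one observes that $\mathbb{P}=I_n\otimes T+C_n\otimes S$, where $C_n\in M_n(\mathbb{C})$ is the real symmetric tridiagonal scalar matrix with $0$ on the main diagonal and $1$ on the two adjacent diagonals. It is classical (it is the adjacency matrix of a path, equivalently the second-difference operator with Dirichlet conditions) that $C_n$ is unitarily diagonalizable with eigenvalues $2\cos\frac{k\pi}{n+1}$, $k=1,\dots,n$: the discrete sine transform $V=\bigl[\sqrt{\tfrac{2}{n+1}}\,\sin\frac{jk\pi}{n+1}\bigr]_{j,k=1}^{n}$ is unitary and $V^{*}C_nV=\mathrm{diag}\bigl(2\cos\frac{k\pi}{n+1}:k=1,\dots,n\bigr)$.

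Next I would set $U:=V\otimes I_{\mathcal{H}}$, which is a unitary on $\mathcal{H}^{(n)}$, and compute $U^{*}\mathbb{P}\,U=I_n\otimes T+(V^{*}C_nV)\otimes S=\bigoplus_{k=1}^{n}\bigl(T+(2\cos\frac{k\pi}{n+1})S\bigr)$. By the unitary invariance of the $q$-numerical radius (Lemma~\ref{Lem0000}(iii)), this yields the key identity $w_q(\mathbb{P})=w_q\bigl(\bigoplus_{k=1}^{n}(T+(2\cos\frac{k\pi}{n+1})S)\bigr)$, reducing the whole problem to estimating $w_q$ of a finite block-diagonal operator.

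For the lower bound, embedding unit vectors of the $k$-th summand into $\mathcal{H}^{(n)}$ by zero-padding shows $W_q\bigl(T+(2\cos\frac{k\pi}{n+1})S\bigr)\subseteq W_q(\mathbb{P})$ for each $k$, hence $\max_k w_q(T+(2\cos\frac{k\pi}{n+1})S)\le w_q(\mathbb{P})$ (this is also the left inequality of Lemma~\ref{Lemma1.4}, iterated). For the upper bound one applies the direct-sum estimate in finite form: $w_q\bigl(\bigoplus_{k=1}^{n}A_k\bigr)\le\frac{q+2\sqrt{1-q^{2}}}{q}\max_{1\le k\le n}w_q(A_k)$. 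If one prefers to cite Lemma~\ref{Lemma1.3} verbatim rather than its finite analogue, extend the family periodically to the infinite sequence of operators $T_m=T+(2\cos\frac{k\pi}{n+1})S$ with $k\equiv m \pmod n$; then $\bigoplus_{k=1}^{n}T_k$ is a direct summand of $\bigoplus_{m\in\mathbf{N}}T_m$, so $w_q(\bigoplus_{k=1}^{n}T_k)$ is at most $w_q$ of the infinite sum, which Lemma~\ref{Lemma1.3} bounds by $\frac{q+2\sqrt{1-q^{2}}}{q}\sup_m w_q(T_m)=\frac{q+2\sqrt{1-q^{2}}}{q}\max_{1\le k\le n}w_q(T+(2\cos\frac{k\pi}{n+1})S)$. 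Combining either route with the identity for $w_q(\mathbb{P})$ gives both asserted inequalities.

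The step requiring the most care is the reduction itself: one must verify that conjugation by the scalar-in-$\mathbb{C}^n$ unitary $V\otimes I_{\mathcal{H}}$ genuinely transports $\mathbb{P}$ into the stated block-diagonal operator — i.e.\ that the scalar tridiagonal $C_n$ can be ``pulled through'' the operator entry $S$ — and that the relevant scalar matrix is the \emph{symmetric} $C_n$, which is precisely what makes its spectrum real and equal to $\{2\cos\frac{k\pi}{n+1}\}$. This uses crucially that $\mathbb{P}$ carries the same entry $S$ both above and below the diagonal. Once the identity for $w_q(\mathbb{P})$ is established, the remaining estimates are immediate consequences of the Section~2 lemmas.
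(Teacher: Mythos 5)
Your proposal is correct and follows essentially the same route as the paper: conjugation by the discrete sine transform unitary $\mathbb{U}=\bigl[\sqrt{\tfrac{2}{n+1}}\sin\tfrac{ij\pi}{n+1}\,I\bigr]$ to block-diagonalize $\mathbb{P}$ into $\bigoplus_{k=1}^{n}\bigl(T+(2\cos\tfrac{k\pi}{n+1})S\bigr)$, followed by the direct-sum bounds of Lemma~\ref{Lemma1.3}. Your tensor-product formulation $\mathbb{P}=I_n\otimes T+C_n\otimes S$ and the remark on passing from the infinite-sequence statement of Lemma~\ref{Lemma1.3} to its finite analogue are just cleaner packaging of the same argument.
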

\begin{proof}
   Let $\mathbb{U}=[U_{ij}]$, where $U_{ij}=\sqrt{\frac{2}{n+1}}\begin{bmatrix}     \sin \left(\frac{ij\pi}{n+1}\right) \end{bmatrix}\bigotimes I, 1\leq i, j\leq n$, where $I$ is the identity operator in $\mathcal{B}(\mathcal{H})$. It can be easily check that $\mathbb{U}$ is a unitary operator in $\mathcal{B}(\mathcal{H}^{(n)})$ and then
$$ \mathbb{UP}\mathbb{U}^*=  \begin{bmatrix}
		T+\left(2\cos \frac{\pi}{n+1}\right)S & O  & \cdots  &   O\\
		O  & T+\left(2\cos \frac{2\pi}{n+1}\right)S & \cdots  & O\\
		\vdots & \vdots &\ddots & \vdots \\
		O  & O  &\cdots&  T+\left(2\cos \frac{n\pi}{n+1}\right)S
		\end{bmatrix}.$$

   Now, using the fact $w_q(\mathbb{P})=w_q(\mathbb{UP}\mathbb{U}^*)$, and applying Lemma \ref{Lemma1.3}, we have
\begin{align*}
	\max \left\{w_q\left(T+\left(2\cos \frac{k\pi}{n+1}\right)S\right)\right\}_{k=1}^n\leq w_q(\mathbb{P})\leq \frac{q+2\sqrt{1-q^2}}{q}\max\left\{ w_q\left(T+\left(2\cos\frac{k\pi}{n+1}\right)S\right): k= 1,\dots, n \right\}.
	\end{align*}
   
\end{proof}

Here are some special cases to the Theorem \ref{tridiagonalmatrix}.
\begin{rem}\label{Rem1111}
    For the case $n=3$, we have 
     \begin{align}\label{Eqrem}
            \max\{w_q(T+\sqrt{2}S), w_q(T), w_q(T-\sqrt{2}S)\}&\leq w_q\left(\begin{bmatrix}
			T& S &O\\
			S & T &S\\
                 O&S &T
			\end{bmatrix}\right)\nonumber\\
   &\leq \frac{q+2\sqrt{1-q^2}}{q} \max\{w_q(T+\sqrt{2}S), w_q(T), w_q(T-\sqrt{2}S)\}.
        \end{align}
\end{rem}

\begin{ex}\label{Ex_0}
    Let us consider $\mathbb{A}=\begin{bmatrix}
       2 & 1&0\\
        1 & 2&1\\
        0&1&2
    \end{bmatrix}$ and $q\in [0, 1]$.
   The lower bound of $w_q(\mathbb{A})$ in Remark \ref {Rem1111} is $(2+\sqrt{2})q$, while the upper bound is $(2+\sqrt{2})(q+2\sqrt{1-q^2})$. 
The boundary of $W_q(\mathbb{A})$ for $q = 0.5$ and comparison of $w_q(\mathbb{A})$ with its upper and lower bounds have been shown in Figure \ref{Fig-0}. It can be noted that the analytical expression of $W_q(\mathbb{A})$ and $w_q(\mathbb{A})$ can not be obtained by applying the methods as mentioned in Lemma \ref{lem1} and \cite[Chapter 8, Theorem 3.5]{Gau}. Therefore, the $W_q(\mathbb{A})$ and $w_q(\mathbb{A})$ have been estimated using the numerical algorithm based on the definition of q-numerical radius (i.e.\eqref{Eq1.3}).
\end{ex}

     \begin{figure}[H]
\centering
  {\includegraphics[scale=.52]{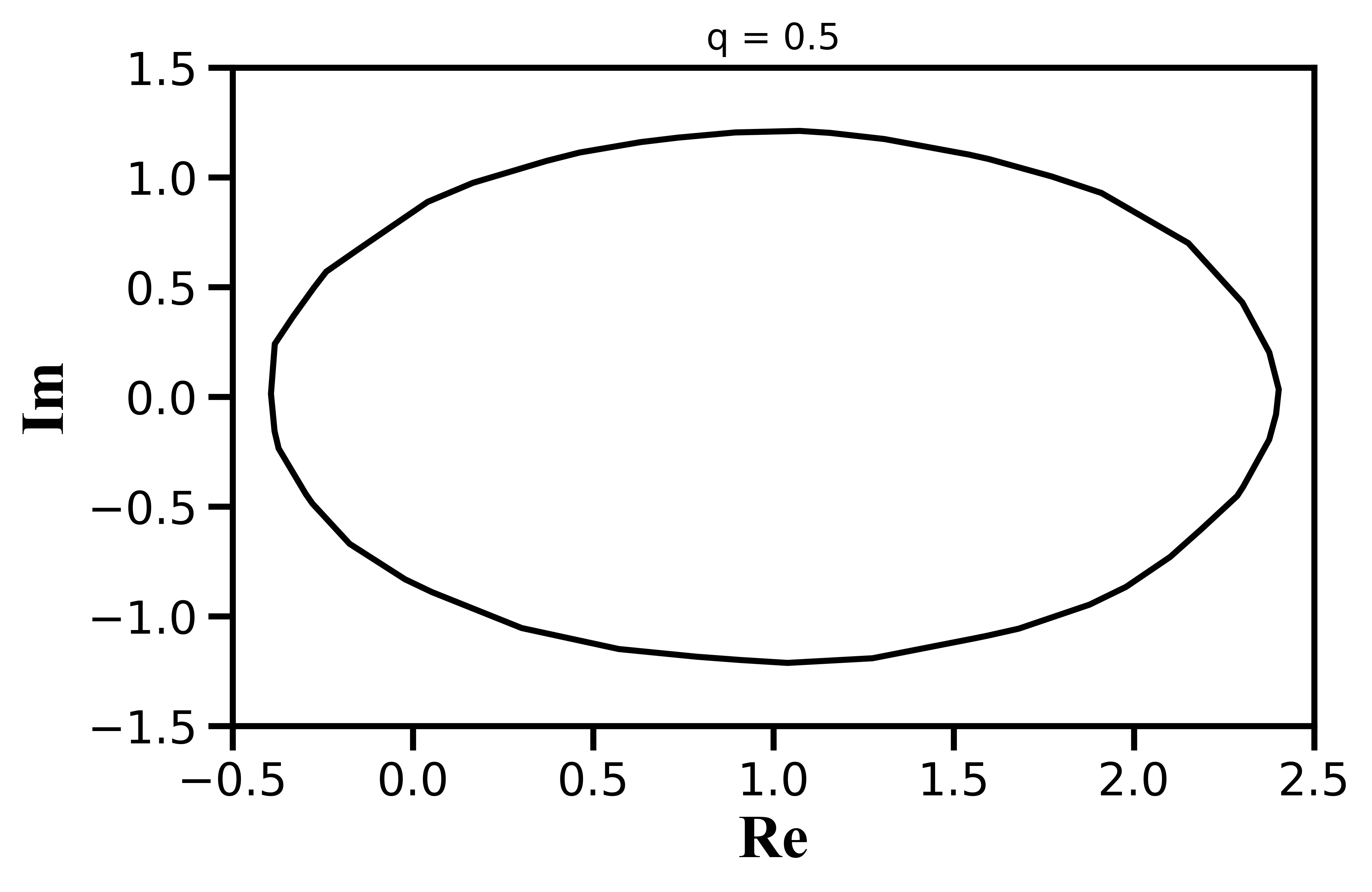}}
{\includegraphics[scale=.52]{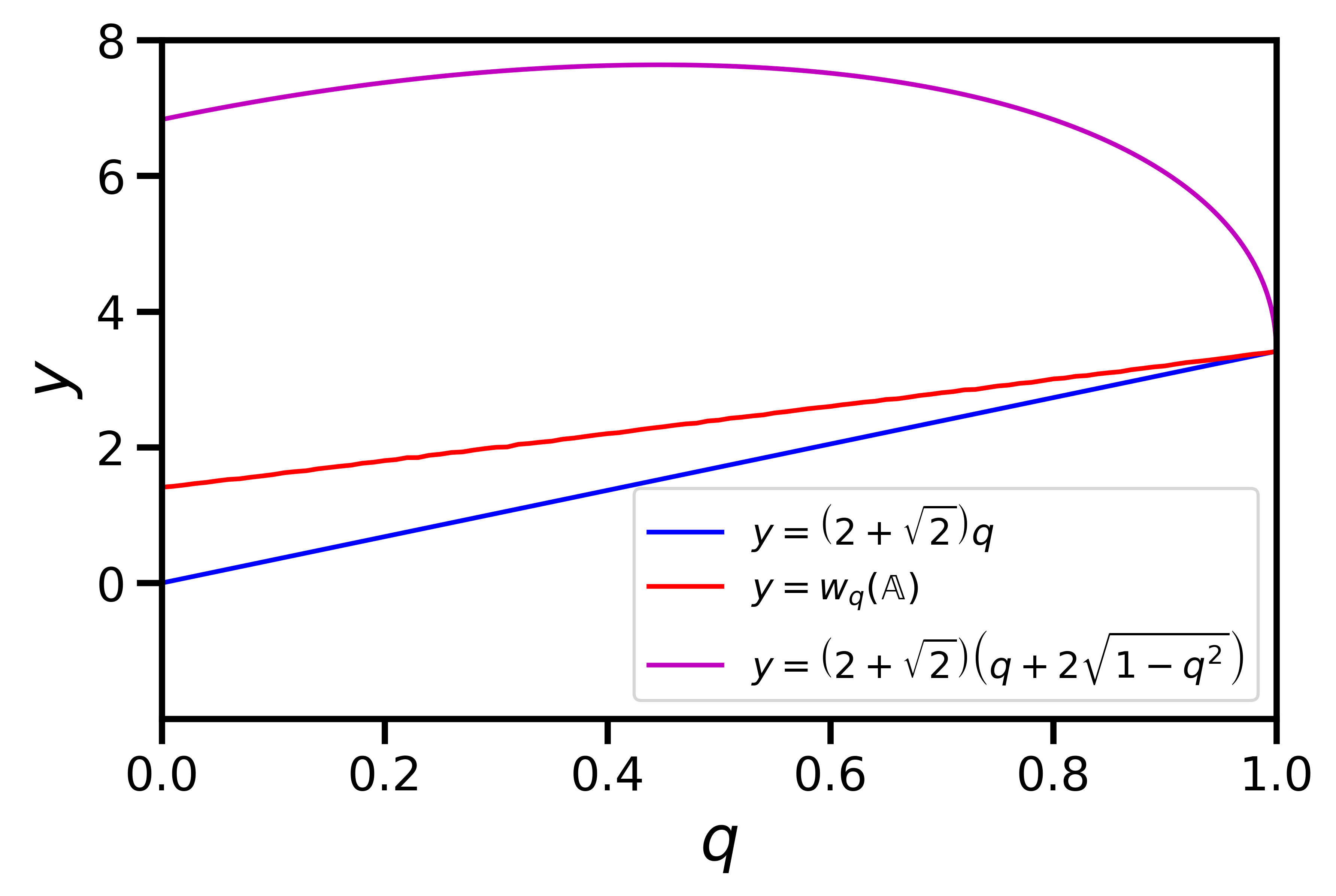}}
\caption{The boundary of the $q$-numerical range of matrix $\mathbb{A}$  for $q = 0.5$ (left column). Comparison of $w_q\left(\mathbb{A}\right) $ with upper and lower bounds \eqref{Eqrem}  (right column). Here  $\mathbb{A}$ is the tridiagonal matrix defined in Example \eqref{Ex_0}.   }	 
\label{Fig-0}
\end{figure}

Here we have established some special cases to the Theorem \ref{tridiagonalmatrix}. 
\begin{rem}\label{Rem_tridiag}
    \begin{enumerate}
        \item [\textnormal{(i)}] If $T=0$, then 
        \begin{align*}
	2\max \left\{\left|\cos \frac{k\pi}{n+1}\right|w_q(S)\right\}_{k=1}^n\leq w_q(\mathbb{P})\leq \frac{q+2\sqrt{1-q^2}}{q}2\max \left\{\left|\cos \frac{k\pi}{n+1}\right|w_q(S): k= 1,\dots, n \right\}.
	\end{align*}
 In particular, 
	$w_q(S)\leq w_q\left(\begin{bmatrix}
			O & S\\
			S & O
			\end{bmatrix}\right)\leq \frac{q+2\sqrt{1-q^2}}{q}w_q(S).$ 
  \item [\textnormal{(ii)}] If $S=0$, then 
        \begin{align*}
	w_q(S)\leq w_q(\mathbb{P})\leq \frac{q+2\sqrt{1-q^2}}{q}w_q(S).
	\end{align*}
   \item [\textnormal{(iii)}] If $T=S$, then 
        \begin{align*}
	\max \left\{\left|1+2\cos \frac{k\pi}{n+1}\right|w_q(T)\right\}_{k=1}^n\leq w_q(\mathbb{P})\leq \frac{q+2\sqrt{1-q^2}}{q}\max \left\{\left|1+2\cos \frac{k\pi}{n+1}\right|w_q(T): k= 1,\dots, n \right\}.
	\end{align*}
  \item [\textnormal{(iv)}] If $S=iT$, then 
        \begin{align*}
	\max \left\{w_q\left(\left(1+\left(2\cos \frac{k\pi}{n+1}\right)i\right)T\right)\right\}_{k=1}^n&\leq w_q(\mathbb{P})\\
 &\leq \frac{q+2\sqrt{1-q^2}}{q}\max \left\{w_q\left(\left(1+\left(2\cos \frac{k\pi}{n+1}\right)i\right)T\right)\right\}_{k=1}^n.
	\end{align*}
    \end{enumerate}
\end{rem}

\begin{rem}\label{Rem3.4}
As a special case of our result, we have the following results which are recently established by Stankovic {\it et al.} \cite{HSMKID}.
    \begin{enumerate}
        \item [\textnormal{(i)}] For the case $n=2$, we have  \begin{align*}
            \max\{w_q(T+S), w_q(T-S)\}\leq w_q\left(\begin{bmatrix}
			T& S\\
			S & T
			\end{bmatrix}\right)\leq \frac{q+2\sqrt{1-q^2}}{q}\max\{w_q(T+S), w_q(T-S)\}.
        \end{align*}
         In particular, 
	$w_q(S)\leq w_q\left(\begin{bmatrix}
			O & S\\
			S & O
			\end{bmatrix}\right)\leq \frac{q+2\sqrt{1-q^2}}{q}w_q(S).$ 
    \item [\textnormal{(ii)}] By setting $q\rightarrow 1$ in Remark \ref{Rem3.4} (i), we have the following equality which is already established by Hirzallah {\it et al.} \cite{TY}.
    $w\left(\begin{bmatrix}
			T& S\\
			S & T
			\end{bmatrix}\right) = \max\{w(T+S), w(T-S)\}.$
    \end{enumerate}

\end{rem}

\begin{rem}
	By setting $q=1$ in Theorem  \ref{tridiagonalmatrix}, the inequality becomes equality for the usual numerical radius, proved by Bani-Domi et al. \cite{BaniKitShat} and for $\mathbb{A}$-numerical radius version of the above  Theorem \ref{tridiagonalmatrix} one can see \cite{KITSAT} and for Hilbert Schmidt version of the Theorem \ref{tridiagonalmatrix} one may look at \cite{Sahoo_Moradi_Sababheh_Acta_2024}.  
\end{rem}

\begin{theorem}\label{tridiagonalmatrix1}
	Let $T, S \in \mathcal{B}(\mathcal{H})$, $q\in (0, 1]$, $\mathbb{P}=\begin{bmatrix}
	T & \alpha^{n-2}S  & 0 & \cdots  &   0\\
	S & T &  \alpha^{n-2}S&\cdots  & \vdots\\
	0  & S &  T &\ddots  &  0\\
	\vdots & \vdots &\ddots & \ddots & \alpha^{n-2}S \\
	0  & 0  &\cdots& S & T
	\end{bmatrix}$ be an $n\times n$ tridiagonal operator matrix, and let $\alpha=e^\frac{\pi i}{2n}$ (the $n$-th root of the imaginary number $i$). Then
	\begin{align*}
	\max\left\{ w_q\left(T+(2\alpha^{n-1}\cos\frac{k\pi}{n+1})S\right)\right\}\leq w_q(\mathbb{P})\leq  \frac{q+2\sqrt{1-q^2}}{q}\max\left\{ w_q\left(T+(2\alpha^{n-1}\cos\frac{k\pi}{n+1})S\right)\right\}, 
	\end{align*}
 ~for~  $k= 1,\dots, n.$ 
\end{theorem}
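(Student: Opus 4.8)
The plan is to deduce Theorem~\ref{tridiagonalmatrix1} from the symmetric case already established in Theorem~\ref{tridiagonalmatrix} by a single block‑diagonal unitary conjugation that ``symmetrizes'' the off‑diagonal blocks of $\mathbb{P}$. Fix a square root $\beta$ of the unimodular scalar $\alpha^{n-2}$ (so $\beta^{2}=\alpha^{n-2}$, and $|\beta|=1$ since $|\alpha|=1$), and set $\mathbb{D}=\mathrm{diag}(d_{1}I,d_{2}I,\dots,d_{n}I)$ with $d_{1}=1$ and $d_{j+1}=\beta d_{j}$; because each $|d_{j}|=1$, $\mathbb{D}$ is unitary on $\mathcal{H}^{(n)}$. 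An entrywise computation (reading each $d_{j}$ as the scalar operator $d_{j}I$, which commutes with every block) shows that $\mathbb{D}\mathbb{P}\mathbb{D}^{*}$ keeps $T$ on the block diagonal, replaces the superdiagonal block $\alpha^{n-2}S$ by $d_{i}\alpha^{n-2}\bar d_{i+1}S$ and the subdiagonal block $S$ by $d_{i+1}\bar d_{i}S$; by the choice $d_{j+1}/d_{j}=\beta$ these two coincide and equal $\beta S$. Hence $\mathbb{D}\mathbb{P}\mathbb{D}^{*}$ is exactly the symmetric tridiagonal Toeplitz operator matrix to which Theorem~\ref{tridiagonalmatrix} applies, with $S$ there replaced by $\beta S$.

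Granting that, the rest is routine. Since $w_q$ is invariant under unitary conjugation (Lemma~\ref{Lem0000}(iii)), $w_q(\mathbb{P})=w_q(\mathbb{D}\mathbb{P}\mathbb{D}^{*})$, and applying Theorem~\ref{tridiagonalmatrix} to $\mathbb{D}\mathbb{P}\mathbb{D}^{*}$ gives, for $q\in(0,1]$,
\[
\max_{1\le k\le n}w_q\!\left(T+\Bigl(2\beta\cos\tfrac{k\pi}{n+1}\Bigr)S\right)\le w_q(\mathbb{P})\le\frac{q+2\sqrt{1-q^{2}}}{q}\max_{1\le k\le n}w_q\!\left(T+\Bigl(2\beta\cos\tfrac{k\pi}{n+1}\Bigr)S\right).
\]
Finally one records that, in view of the definition of $\alpha$, the square root $\beta$ may be taken to be $\alpha^{n-1}$; the only other square root is $-\alpha^{n-1}$, and the sign is immaterial, because $\cos\frac{(n+1-k)\pi}{n+1}=-\cos\frac{k\pi}{n+1}$ and $k\mapsto n+1-k$ permutes $\{1,\dots,n\}$, so replacing $\beta$ by $-\beta$ merely reindexes the maximum over $k$. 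Substituting $\beta=\alpha^{n-1}$ yields the assertion.

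I expect the genuinely delicate point to be this last identification — the scalar arithmetic showing that a square root of $\alpha^{n-2}$ is precisely $\alpha^{n-1}$, which is where the specific value of $\alpha$ must be used carefully — together with the routine but easily‑botched verification that the diagonal conjugation really does equalize the super‑ and subdiagonal blocks; everything else is unitary invariance plus a direct appeal to Theorem~\ref{tridiagonalmatrix}. An equivalent route, avoiding the separate introduction of $\mathbb{D}$, is to conjugate $\mathbb{P}$ in one step by the unitary $\mathbb{V}=\mathbb{U}\mathbb{D}$, where $\mathbb{U}$ is the ``sine'' unitary from the proof of Theorem~\ref{tridiagonalmatrix}; then $\mathbb{V}\mathbb{P}\mathbb{V}^{*}=\bigoplus_{k=1}^{n}\bigl(T+2\beta\cos\tfrac{k\pi}{n+1}S\bigr)$, and one applies Lemma~\ref{Lemma1.3} directly to this direct sum.
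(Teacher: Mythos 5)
Your strategy---symmetrize the off-diagonal blocks by a block-diagonal unitary and then invoke Theorem \ref{tridiagonalmatrix}---is exactly the paper's strategy (the paper conjugates by $\mathbb{U}=\mathrm{diag}(I,\alpha I,\dots,\alpha^{n-1}I)$ where you use $\mathbb{D}$), and your computation that $\mathbb{D}\mathbb{P}\mathbb{D}^{*}$ is the symmetric tridiagonal Toeplitz matrix with off-diagonal block $\beta S$, where $\beta^{2}=\alpha^{n-2}$, is correct, as is your remark that the sign of $\beta$ is immaterial. The genuine gap is at precisely the point you flagged as delicate: $\alpha^{n-1}$ is \emph{not} a square root of $\alpha^{n-2}$. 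Indeed $(\alpha^{n-1})^{2}=\alpha^{2n-2}=\alpha^{2n}\alpha^{-2}=-\alpha^{-2}$, whereas $\alpha^{n-2}=\alpha^{n}\alpha^{-2}=i\,\alpha^{-2}$; these coincide only if $i=-1$. The admissible values are $\beta=\pm\alpha^{(n-2)/2}=\pm e^{(n-2)\pi i/4n}$, and $\alpha^{n-1}=e^{(2n-2)\pi i/4n}$ is neither of them (nor does it differ from them by a sign), so the final substitution is invalid: what you have actually proved is the statement with $\alpha^{(n-2)/2}$ in place of $\alpha^{n-1}$, and the two resulting maxima over $k$ are genuinely different quantities.

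This is not a slip you could repair within this method: conjugation by $\mathrm{diag}(d_{1}I,\dots,d_{n}I)$ multiplies the super- and subdiagonal coefficients by $d_{i}\bar d_{i+1}$ and $d_{i+1}\bar d_{i}$ respectively, so their product $\alpha^{n-2}\cdot 1$ is invariant, and no block-diagonal unitary can produce the symmetric matrix with off-diagonal $\alpha^{n-1}S$. The paper's own proof founders on the same rock: with its $\mathbb{U}$ the superdiagonal of $\mathbb{U}^{*}\mathbb{P}\mathbb{U}$ does become $\alpha^{n-1}S$, but the subdiagonal becomes $\alpha^{-1}S\neq\alpha^{n-1}S$, so the displayed identity is false. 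In fact the theorem as printed fails: for $n=2$, $T=S=I$, $q=1$ it asserts $w\left(\begin{bmatrix}I&I\\ I&I\end{bmatrix}\right)\leq\max\{|1+e^{i\pi/4}|,|1-e^{i\pi/4}|\}=\sqrt{2+\sqrt{2}}<2$, while the left-hand side equals $2$ (consistently with Example \ref{Ex_2}). Your argument, stopped before its last sentence, is a correct proof of the corrected statement.
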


\begin{proof}

 Let $\mathbb{U}=[U_{ij}]$, where
\begin{align*}
U_{ij} =\left\{ \begin{array}{lll}
\alpha^{J}I & \textnormal{for $ i=j$, $J=0,\dots, n-1$,}\\
O & \textnormal{for $ i \neq j $}. 
\end{array} \right.
\end{align*}
 It can be easily check that $\mathbb{U}$ is a unitary operator in $\mathcal{B}(\mathcal{H}^{(n)})$ and then
$$ \mathbb{U}^*\mathbb{P}\mathbb{U}=  \begin{bmatrix}
		T & \alpha^{n-1}S  & O & \cdots  &  O\\
	\alpha^{n-1}S & T &  \alpha^{n-1}S&\cdots  & \vdots\\
	O  & \alpha^{n-1}S &  T &\ddots  &  O\\
	\vdots & \vdots &\ddots & \ddots & \alpha^{n-1}S \\
	O & O  &\cdots& \alpha^{n-1}S & T
		\end{bmatrix}.$$

   Now, using the fact $w_q(\mathbb{P})=w_q( \mathbb{U}^*\mathbb{P}\mathbb{U})$, and applying Theorem \ref{tridiagonalmatrix}, we have
\begin{align*}
	\max\left\{ w_q\left(T+(2\alpha^{n-1}\cos\frac{k\pi}{n+1})S\right)\right\}\leq w_q(\mathbb{P})\leq  \frac{q+2\sqrt{1-q^2}}{q}\max\left\{ w_q\left(T+(2\alpha^{n-1}\cos\frac{k\pi}{n+1})S\right)\right\}, 
	\end{align*}
 ~for~  $k= 1,\dots, n.$ 
   
\end{proof}

\begin{theorem}\label{tridiagonalmatrix2}
	Let $T, S \in \mathcal{B}(\mathcal{H})$, $q\in (0, 1]$, $\mathbb{P}=\begin{bmatrix}
	T & \omega^{n-1}S  & 0 & \cdots  &   0\\
	\omega S  & T &  \omega^{n-1}S&\cdots  & \vdots\\
	0  & \omega S &  T&\ddots  &  0\\
	\vdots & \vdots &\ddots & \ddots & \omega^{n-1}S \\
	0  & 0  &\cdots& \omega S & T
	\end{bmatrix}$ be an $n\times n$ tridiagonal operator matrix, and let $\omega=e^\frac{2\pi i}{n}$.  Then 
	\begin{align*}
	\max \left\{w_q\left(T+\left(2\cos \frac{k\pi}{n+1}\right)S\right)\right\}\leq w_q(\mathbb{P})\leq \frac{q+2\sqrt{1-q^2}}{q}\max\left\{ w_q\left(T+\left(2\cos\frac{k\pi}{n+1}\right)S\right) \right\},
	\end{align*}	
 $ k= 1,\dots, n$.
\end{theorem}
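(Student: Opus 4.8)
The plan is to reduce Theorem \ref{tridiagonalmatrix2} to the already-established Theorem \ref{tridiagonalmatrix} by means of a diagonal unitary conjugation, in exactly the same spirit as the proof of Theorem \ref{tridiagonalmatrix1}. First I would introduce the block-diagonal operator $\mathbb{U}=[U_{ij}]$ on $\mathcal{B}(\mathcal{H}^{(n)})$ defined by $U_{jj}=\omega^{\,j-1}I$ for $j=1,\dots,n$ and $U_{ij}=O$ for $i\neq j$, where $\omega=e^{2\pi i/n}$. Since $|\omega^{\,j-1}|=1$, each diagonal block is a unitary on $\mathcal{H}$, so $\mathbb{U}$ is a unitary operator in $\mathcal{B}(\mathcal{H}^{(n)})$.

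Next I would compute $\mathbb{U}^{*}\mathbb{P}\mathbb{U}$ blockwise. Using $(\mathbb{U}^{*}\mathbb{P}\mathbb{U})_{ij}=\overline{\omega^{\,i-1}}\,P_{ij}\,\omega^{\,j-1}$, the $(i,i+1)$ superdiagonal block becomes $\omega^{-(i-1)}(\omega^{\,n-1}S)\,\omega^{\,i}=\omega^{\,n}S=S$ because $\omega^{\,n}=1$, while the $(i+1,i)$ subdiagonal block becomes $\omega^{-i}(\omega S)\,\omega^{\,i-1}=\omega^{0}S=S$; the diagonal blocks are left unchanged and equal $T$. Hence $\mathbb{U}^{*}\mathbb{P}\mathbb{U}$ is precisely the symmetric tridiagonal operator matrix appearing in Theorem \ref{tridiagonalmatrix}, with the same $T$ and $S$.

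Finally, invoking the unitary invariance $w_q(\mathbb{P})=w_q(\mathbb{U}^{*}\mathbb{P}\mathbb{U})$ from Lemma \ref{Lem0000}(iii), and then applying Theorem \ref{tridiagonalmatrix} to $\mathbb{U}^{*}\mathbb{P}\mathbb{U}$, yields
\[
\max\left\{w_q\!\left(T+\left(2\cos\frac{k\pi}{n+1}\right)S\right)\right\}_{k=1}^{n}\leq w_q(\mathbb{P})\leq \frac{q+2\sqrt{1-q^{2}}}{q}\max\left\{w_q\!\left(T+\left(2\cos\frac{k\pi}{n+1}\right)S\right):k=1,\dots,n\right\},
\]
which is the asserted pair of inequalities.

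As for difficulties, there is essentially no conceptual obstacle here: the only thing that genuinely needs care is the exponent bookkeeping for the two off-diagonal blocks, together with the observation that $\omega^{\,n}=1$ forces both off-diagonals to collapse to $S$ under the conjugation. One should also be careful to apply the conjugation on the side matching the conventions of Theorems \ref{tridiagonalmatrix} and \ref{tridiagonalmatrix1} — since $w_q$ is invariant under either $\mathbb{U}^{*}\mathbb{P}\mathbb{U}$ or $\mathbb{U}\mathbb{P}\mathbb{U}^{*}$ this does not affect correctness, but it keeps the exposition uniform with the preceding results.
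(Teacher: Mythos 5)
Your proof is correct and follows essentially the same route as the paper: conjugation by the diagonal unitary $\mathbb{U}=\mathrm{diag}(I,\omega I,\dots,\omega^{n-1}I)$ collapses both off-diagonals to $S$, after which Theorem \ref{tridiagonalmatrix} and the unitary invariance of $w_q$ give the result. (Your choice of $\mathbb{U}^{*}\mathbb{P}\mathbb{U}$ is in fact the side on which the exponents actually cancel; the paper writes $\mathbb{U}\mathbb{P}\mathbb{U}^{*}$ with the same $\mathbb{U}$, a harmless slip.)
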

\begin{proof}

 Let $\mathbb{U}=[U_{ij}]$, where
\begin{align*}
U_{ij} =\left\{ \begin{array}{lll}
\omega^{J}I & \textnormal{for $ i=j$, $J=0,\dots, n-1$,}\\
O & \textnormal{for $ i \neq j $}. 
\end{array} \right.
\end{align*}
 It can be easily check that $\mathbb{U}$ is a unitary operator in $\mathcal{B}(\mathcal{H}^{(n)})$ and then
$$  \mathbb{UP}\mathbb{U}^*=  \begin{bmatrix}
		T & S  & O & \cdots  &  O\\
	S & T &  S&\cdots  & \vdots\\
	O  & S &  T &\ddots  &  O\\
	\vdots & \vdots &\ddots & \ddots & S \\
	O & O  &\cdots& S & T
		\end{bmatrix}.$$

   Now, using the fact $w_q(\mathbb{P})=w_q(  \mathbb{UP}\mathbb{U}^*)$, and implementing Theorem \ref{tridiagonalmatrix}, we have
\begin{align*}
	\max \left\{w_q\left(T+\left(2\cos \frac{k\pi}{n+1}\right)S\right)\right\}\leq w_q(\mathbb{P})\leq \frac{q+2\sqrt{1-q^2}}{q}\max\left\{ w_q\left(T+\left(2\cos\frac{k\pi}{n+1}\right)S\right) \right\},
	\end{align*}	
 $ k= 1,\dots, n$.  
\end{proof}
The following Theorem represents the $q$-numerical radius of $n\times n$ anti-tridiagonal operator matrix. 
\begin{theorem}\label{antitridiagonalmatrix}
	Let $T, S \in \mathcal{B}(\mathcal{H})$, $q\in (0, 1]$, and $\mathbb{P}=\begin{bmatrix}
O &\cdots  &O & S  &   T\\
\vdots & \adots & S&T  &S\\
	O  & \adots &  T&S  &  O\\
	S& \adots &\adots & \adots & \vdots \\
	T  &S  &O& \cdots & O
	\end{bmatrix}$ be an $n\times n$ anti-tridiagonal operator matrix. Then 
	\begin{align*}
	\max\left\{ w_q\left((-1)^{k+1}\left[T+(2\cos\frac{k\pi}{n+1})S\right]\right) \right\}&\leq w_\mathbb{A}(\mathbb{P})\\
 &\leq \frac{q+2\sqrt{1-q^2}}{q}\max\left\{ w_q\left((-1)^{k+1}\left[T+(2\cos\frac{k\pi}{n+1})S\right]\right) \right\},
	\end{align*}	
 $k= 1,\dots, n$.
\end{theorem}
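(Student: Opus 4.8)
The plan is to reduce the anti-tridiagonal matrix $\mathbb{P}$ to the symmetric tridiagonal matrix of Theorem \ref{tridiagonalmatrix} by absorbing a ``block reversal'' unitary. Let $\mathbb{J}=[J_{ij}]\in\mathcal{B}(\mathcal{H}^{(n)})$ be the block operator matrix with $J_{ij}=I$ when $i+j=n+1$ and $J_{ij}=O$ otherwise; then $\mathbb{J}=\mathbb{J}^*=\mathbb{J}^{-1}$, and right multiplication by $\mathbb{J}$ reverses the order of the block columns. A direct inspection of the entries shows that $\mathbb{P}\mathbb{J}=\mathbb{T}$, where $\mathbb{T}$ is the $n\times n$ symmetric tridiagonal operator matrix having $T$ on the main diagonal and $S$ on the two neighbouring diagonals (the matrix denoted $\mathbb{P}$ in Theorem \ref{tridiagonalmatrix}); equivalently, $\mathbb{P}=\mathbb{T}\mathbb{J}$.

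Next I would take the same unitary $\mathbb{U}=[U_{ij}]$, $U_{ij}=\sqrt{\tfrac{2}{n+1}}\,\sin\!\left(\tfrac{ij\pi}{n+1}\right)I$, used in the proof of Theorem \ref{tridiagonalmatrix}, for which $\mathbb{U}\mathbb{T}\mathbb{U}^*=\bigoplus_{k=1}^{n}\left(T+\left(2\cos\tfrac{k\pi}{n+1}\right)S\right)=:D$. The key computation is the identity $\mathbb{U}\mathbb{J}\mathbb{U}^*=\mathbb{E}$, where $\mathbb{E}:=\bigoplus_{k=1}^{n}(-1)^{k+1}I$. Indeed, using $\sin\!\left(\tfrac{i(n+1-j)\pi}{n+1}\right)=\sin\!\left(i\pi-\tfrac{ij\pi}{n+1}\right)=(-1)^{i+1}\sin\!\left(\tfrac{ij\pi}{n+1}\right)$, one gets $(\mathbb{U}\mathbb{J})_{ij}=U_{i,n+1-j}=(-1)^{i+1}U_{ij}$, that is, $\mathbb{U}\mathbb{J}=\mathbb{E}\mathbb{U}$, whence $\mathbb{U}\mathbb{J}\mathbb{U}^*=\mathbb{E}$. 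Therefore
$$\mathbb{U}\mathbb{P}\mathbb{U}^*=(\mathbb{U}\mathbb{T}\mathbb{U}^*)(\mathbb{U}\mathbb{J}\mathbb{U}^*)=D\mathbb{E}=\bigoplus_{k=1}^{n}(-1)^{k+1}\left(T+\left(2\cos\tfrac{k\pi}{n+1}\right)S\right).$$

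To conclude, I would use $w_q(\mathbb{P})=w_q(\mathbb{U}\mathbb{P}\mathbb{U}^*)$ from Lemma \ref{Lem0000}(iii) together with the finite direct-sum estimate of Lemma \ref{Lemma1.3}, applied to the diagonal blocks $B_k=(-1)^{k+1}\left(T+\left(2\cos\tfrac{k\pi}{n+1}\right)S\right)$ exactly as in the proof of Theorem \ref{tridiagonalmatrix}; this gives precisely the stated two-sided bound. One may also observe, via Lemma \ref{Lem0000}(i), that $w_q(B_k)=w_q\!\left(T+\left(2\cos\tfrac{k\pi}{n+1}\right)S\right)$, so the bounds here in fact coincide with those of Theorem \ref{tridiagonalmatrix}.

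I do not foresee a substantial obstacle. The only steps requiring care are the entrywise verification that $\mathbb{P}\mathbb{J}=\mathbb{T}$ and, above all, the sign bookkeeping in $\mathbb{U}\mathbb{J}\mathbb{U}^*=\mathbb{E}$ — the point being that the reversal is completely absorbed by the discrete sine transform into the alternating-sign diagonal $\mathbb{E}$, leaving a block-diagonal operator to which Lemma \ref{Lemma1.3} directly applies.
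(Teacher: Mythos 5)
Your proposal is correct and follows essentially the same route as the paper: conjugation by the discrete sine transform unitary $\mathbb{U}$ of Theorem \ref{tridiagonalmatrix} to obtain $\mathbb{U}\mathbb{P}\mathbb{U}^*=\bigoplus_{k=1}^{n}(-1)^{k+1}\left(T+\left(2\cos\tfrac{k\pi}{n+1}\right)S\right)$, followed by Lemma \ref{Lemma1.3}. The only difference is that you actually verify this identity via the factorization $\mathbb{P}=\mathbb{T}\mathbb{J}$ and $\mathbb{U}\mathbb{J}\mathbb{U}^*=\mathbb{E}$, a computation the paper merely asserts, and your closing remark that $w_q\left((-1)^{k+1}B\right)=w_q(B)$ is a valid (and clarifying) addition.
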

\begin{proof}
    Using the same $\mathbb{U}$ as mentioned in Theorem \ref{tridiagonalmatrix}, we have 
    $$ \mathbb{UP}\mathbb{U}^*=  \begin{bmatrix}
		T+\left(2\cos \frac{\pi}{n+1}\right)S & O  & \cdots  &   O\\
		O  & -(T+\left(2\cos \frac{2\pi}{n+1}\right)S) & \cdots  & O\\
		\vdots & \vdots &\ddots & \vdots \\
		O  & O  &\cdots&  (-1)^{n+1}(T+\left(2\cos \frac{n\pi}{n+1}\right)S)
		\end{bmatrix}.$$

   Now, using the fact $w_q(\mathbb{P})=w_q(\mathbb{UP}\mathbb{U}^*)$, and implementing Lemma \ref{Lemma1.3}, we have our desired result.
\end{proof}
Here are some special cases of the Theorem \ref{antitridiagonalmatrix}. 
\begin{rem}
    \begin{enumerate}
        \item [\textnormal{(i)}] If $T=0$, then 
        \begin{align*}
	2\max \left\{\left|\cos \frac{k\pi}{n+1}\right|w_q(S)\right\}_{k=1}^n\leq w_q(\mathbb{P})\leq \frac{q+2\sqrt{1-q^2}}{q}2\max \left\{\left|\cos \frac{k\pi}{n+1}\right|w_q(S): k= 1,\dots, n \right\}.
	\end{align*}
 In particular, 
	$w_q(S)\leq w_q\left(\begin{bmatrix}
			O & S\\
			S & O
			\end{bmatrix}\right)\leq \frac{q+2\sqrt{1-q^2}}{q}w_q(S).$ 
  \item [\textnormal{(ii)}] If $S=0$, then 
        \begin{align*}
	w_q(S)\leq w_q(\mathbb{P})\leq \frac{q+2\sqrt{1-q^2}}{q}w_q(S).
	\end{align*}
   \item [\textnormal{(iii)}] If $T=S$, then 
        \begin{align*}
	\max \left\{\left|1+2\cos \frac{k\pi}{n+1}\right|w_q(T)\right\}_{k=1}^n\leq w_q(\mathbb{P})\leq \frac{q+2\sqrt{1-q^2}}{q}\max \left\{\left|1+2\cos \frac{k\pi}{n+1}\right|w_q(T): k= 1,\dots, n \right\}.
	\end{align*}
  \item [\textnormal{(iv)}] If $S=iT$, then 
        \begin{align*}
	\max \left\{w_q\left(\left(1+\left(2\cos \frac{k\pi}{n+1}\right)i\right)T\right)\right\}_{k=1}^n&\leq w_q(\mathbb{P})\\
& \leq \frac{q+2\sqrt{1-q^2}}{q}\max \left\{w_q\left(\left(1+\left(2\cos \frac{k\pi}{n+1}\right)i\right)T\right)\right\}_{k=1}^n.
	\end{align*}
    \end{enumerate}
\end{rem}

\begin{rem}
	By setting $q= 1$ in Theorem  \ref{tridiagonalmatrix1}, \ref{tridiagonalmatrix2}, and \ref{antitridiagonalmatrix}, the inequalities becomes equalities for the usual numerical radius proved by Bani-Domi {\it et al.} \cite{BaniKitShat} and for $\mathbb{A}$-numerical radius version of the above  Theorem \ref{tridiagonalmatrix1}, \ref{tridiagonalmatrix2}, and \ref{antitridiagonalmatrix},  established very recently, one can see \cite{KITSAT} and  for Hilbert Schmidt version of the results one may look at \cite{Sahoo_Moradi_Sababheh_Acta_2024}. 
\end{rem}

	\section{$q$-numerical radius inequalities for circulant and skew circulant operator matrices}
	The aim of this section is to investigate certain $q$-numerical radius inequalities for circulant, skew circulant, imaginary circulant, and imaginary skew circulant operator matrices. The very first result is an upper and lower bound for the  $q$-numerical radius of a circulant operator matrix.
	\begin{theorem}\label{Thm1}
		Let $S_i\in \mathcal{B}(\mathcal{H})$ for $1\leq i\leq n$, $q\in (0, 1]$. Then 
		\begin{align*}
		\max\left\{ w_q\left(\sum_{i=1}^n\omega^{k(1-i)} S_i\right) \right\}\leq w_q(\mathbb{S}_{circ})\leq \frac{q+2\sqrt{1-q^2}}{q}\max\left\{ w_q\left(\sum_{i=1}^n\omega^{k(1-i)} S_i\right) \right\},
		\end{align*}
	where $k=0, 1,\dots, n-1$, $\omega=e^{\frac{2\pi i}{n}}$.
	\end{theorem}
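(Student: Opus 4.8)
\emph{Proof proposal.} The plan is to follow the same strategy as in the proof of Theorem~\ref{tridiagonalmatrix}: produce a single unitary $\mathbb{U}\in\mathcal{B}(\mathcal{H}^{(n)})$ that block-diagonalizes $\mathbb{S}_{circ}$ into $\bigoplus_{k=0}^{n-1}\bigl(\sum_{i=1}^n\omega^{k(1-i)}S_i\bigr)$, and then invoke the unitary invariance of $w_q$ (Lemma~\ref{Lem0000}(iii)) together with the direct-sum bound (Lemma~\ref{Lemma1.3}).

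First I would take the $n$-point discrete Fourier unitary and tensor it with the identity: set $\mathbb{U}=[U_{jk}]$ with $U_{jk}=\tfrac{1}{\sqrt n}\,\omega^{(j-1)(k-1)}\,I$ for $1\le j,k\le n$, where $I$ is the identity on $\mathcal{H}$. Since the scalar matrix $F=\tfrac{1}{\sqrt n}[\omega^{(j-1)(k-1)}]$ is unitary on $\mathbb{C}^n$, $\mathbb{U}$ is unitary on $\mathcal{H}^{(n)}$. Writing $\mathbb{S}_{circ}=\sum_{i=1}^n\Pi^{\,i-1}\otimes S_i$, where $\Pi$ is the basic $n\times n$ cyclic shift matrix arranged so that $\Pi^{\,i-1}$ carries $S_i$ along the correct cyclic diagonal, and using the classical identity $F^*\Pi F=\operatorname{diag}(1,\omega^{-1},\dots,\omega^{-(n-1)})$, I would compute
$$\mathbb{U}^*\mathbb{S}_{circ}\mathbb{U}=\sum_{i=1}^n\bigl(F^*\Pi F\bigr)^{i-1}\otimes S_i=\bigoplus_{k=0}^{n-1}\Bigl(\sum_{i=1}^n\omega^{-k(i-1)}S_i\Bigr)=\bigoplus_{k=0}^{n-1}\Bigl(\sum_{i=1}^n\omega^{k(1-i)}S_i\Bigr).$$

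Then, since $w_q(\mathbb{S}_{circ})=w_q(\mathbb{U}^*\mathbb{S}_{circ}\mathbb{U})$ by Lemma~\ref{Lem0000}(iii), applying Lemma~\ref{Lemma1.3} to this (finite) direct sum $\bigoplus_{k=0}^{n-1}T_k$ with $T_k=\sum_{i=1}^n\omega^{k(1-i)}S_i$ and $q\in(0,1]$ (so $|q|=q$) yields
$$\max_{0\le k\le n-1}w_q(T_k)\le w_q(\mathbb{S}_{circ})\le\frac{q+2\sqrt{1-q^2}}{q}\,\max_{0\le k\le n-1}w_q(T_k),$$
which is precisely the asserted pair of inequalities. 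The only delicate point is the bookkeeping in the middle step: one must fix compatible conventions for $\Pi$ and $F$ so that the first row of $\mathbb{S}_{circ}$ reads $(S_1,\dots,S_n)$ with the remaining rows its successive cyclic shifts, and so that $F$ simultaneously diagonalizes every power $\Pi^{\,i-1}$ with eigenvalue $\omega^{-k(i-1)}$ in the $k$-th slot. Choosing the conjugate Fourier matrix merely flips the sign of the exponent, but since the index set $\{\,k(1-i)\bmod n:0\le k\le n-1\,\}$ is symmetric under $k\mapsto n-k$, the maximum over $k$ is unchanged, so the statement is insensitive to this choice; everything else is a direct appeal to the quoted lemmas.
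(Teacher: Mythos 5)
Your proposal is correct and follows essentially the same route as the paper: the paper also conjugates $\mathbb{S}_{circ}$ by the Fourier unitary $\mathbb{U}=\frac{1}{\sqrt n}[\omega^{(j-1)(k-1)}I]$ to obtain the block-diagonal form $\bigoplus_{k=0}^{n-1}\sum_{i=1}^n\omega^{k(1-i)}S_i$, and then applies unitary invariance of $w_q$ together with Lemma~\ref{Lemma1.3}. Your tensor-product bookkeeping via $\mathbb{S}_{circ}=\sum_i\Pi^{i-1}\otimes S_i$ and the remark that the block set is invariant under $k\mapsto n-k$ is just a cleaner justification of the same computation.
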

	\begin{proof}
		Let $\mathbb{S}_{circ}=\begin{bmatrix}
		S_{1} & S_{2}  & S_{3} & \cdots  &   S_{n}\\
		S_{n}  & S_{1} &  S{2}&\cdots  & S_{n-1}\\
		S_{n-1}  & S_{n} &  S_{1}&\ddots  &  S_{n-2}\\
		\vdots & \vdots &\ddots & \ddots & \vdots \\
		S_{2}  & S_{3}  &\cdots&  S_{n} & S_{1}
		\end{bmatrix},$ 
		let $1,\omega,\omega^2,\dots, \omega^{n-1}$ be $n$ roots of unity with $\omega=e^{\frac{2\pi i}{n}}$
		
		and $\mathbb{U}=\frac{1}{\sqrt{n}}\begin{bmatrix}
		I & I& I  &  \cdots  &   I\\
		I & \omega I& \omega^2I & \cdots  &   \omega^{n-1}I\\
		I & \omega^2I& \omega^4I &  \cdots  &   \omega^{n-2}I\\
		\vdots & \vdots &\vdots &\ddots& \vdots\\
		I & \omega^{n-1}I& \omega^{n-2}I  &  \cdots  &   \omega I\\
		\end{bmatrix}.$
		
		It can be observed that  that $ \bar{\omega}=\omega^{n-1}, \bar{\omega}^2=\omega^{n-2},\cdots, \bar{\omega}^k=\omega^{n-k}$, $k=0, 1,\dots, n-1$, so
		
		$\mathbb{U}^*=\frac{1}{\sqrt{n}}\begin{bmatrix}
		I & I & I  & \cdots  &   I\\
		I & \omega^{n-1}I& \omega^{n-2}I  & \cdots & \omega I\\
		I & \omega^{n-2}I& \omega^{n-4}I  & \cdots  &  \omega^2I\\
		\vdots & \vdots &\vdots & \ddots &\vdots\\
		I&\omega I& \omega^2I  & \cdots  &  \omega^{n-1}I\\
		\end{bmatrix}$ and $\mathbb{UU}^*= \begin{bmatrix}
		I & O  & \cdots  &   O\\
		O  & I & \cdots  & O\\
		\vdots & \vdots &\ddots & \vdots \\
		O  & O  &\cdots&  I
	\end{bmatrix}=\mathbb{U}^*\mathbb{U}.$
		Thus, $\mathbb{U}$ is a unitary operator. Now, 
		\begin{align*}
		\mathbb{US}_{circ}\mathbb{U}^*  
		&=\bigoplus_{k=0}^{n-1}\sum_{i=1}^n \omega^{k(i-1)}
		  S_i\\
		&=\bigoplus_{k=0}^{n-1}\sum_{i=1}^n \bar{\omega}^{k(i-1)}
		  S_i\\
		&=\bigoplus_{k=0}^{n-1}\sum_{i=1}^n \omega^{k(1-i)}
		  S_i.
		\end{align*}
		Using the fact that $w_q(\mathbb{S})=w_q(\mathbb{USU}^*)$ for any $\mathbb{S}\in\mathcal{B}(\mathcal{H}),$ and implementing Lemma \ref{Lemma1.3},
		we get
		\begin{align*}
w_q(\mathbb{S}_{circ})=w_q(\mathbb{US}_{circ}\mathbb{U}^*)&=w_q\left(\bigoplus_{k=0}^{n-1}\sum_{i=1}^n \omega^{k(1-i)}
		  S_i\right)\\
		&=w_q\left(\bigoplus_{k=0}^{n-1}\sum_{i=1}^n \omega^{k(1-i)}
		  S_i\right)\\
		&\leq \frac{q+2\sqrt{1-q^2}}{q}\max\left\{w_q\left(\sum_{i=1}^n  S_i\right),w_q\left(\sum_{i=1}^n\omega^{(1-i)}S_i \right),\dots, w_q\left(\sum_{i=1}^n\omega^{(n-1)(1-i)} S_i\right) \right\}\\
		&=\frac{q+2\sqrt{1-q^2}}{q}\max\left\{ w_q\left(\sum_{i=1}^n\omega^{k(1-i)} S_i\right): k=0, 1,\dots, n-1 \right\}.
		\end{align*}
		Similarly, the left side of the inequality follows from Lemma \ref{Lemma1.3}.
	\end{proof}
	As a special case of Theorem \ref{Thm1}, on the simple scenario when we are dealing with two operators, we have a result that is already established by \cite[Lemma 5.3]{HSMKID}.

 	\begin{corollary}\label{Cor_3.1}
 		Let $T, S\in \mathcal{B}(\mathcal{H})$, $q\in (0, 1]$. Then
 		\begin{align}\label{eqsymmetric}
            \max\{w_q(T+S), w_q(T-S)\}\leq w_q\left(\begin{bmatrix}
			T& S\\
			S & T
			\end{bmatrix}\right)\leq \frac{q+2\sqrt{1-q^2}}{q}\max\{w_q(T+S), w_q(T-S)\}.
        \end{align}
 	\end{corollary}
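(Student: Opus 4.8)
The plan is to derive Corollary \ref{Cor_3.1} as the case $n=2$ of Theorem \ref{Thm1}. First I would specialize $\mathbb{S}_{circ}=\mathrm{circ}(S_1,S_2)$ with $S_1=T$ and $S_2=S$; for $n=2$ the circulant matrix is exactly $\begin{bmatrix} T & S\\ S & T\end{bmatrix}$, and the relevant root of unity is $\omega=e^{2\pi i/2}=-1$. Plugging $k=0$ and $k=1$ into the expression $\sum_{i=1}^{2}\omega^{k(1-i)}S_i$ gives $S_1+S_2=T+S$ for $k=0$ and $S_1+\omega^{-1}S_2=T-S$ for $k=1$. Hence Theorem \ref{Thm1} immediately yields
\begin{align*}
\max\{w_q(T+S),\,w_q(T-S)\}\le w_q\left(\begin{bmatrix} T & S\\ S & T\end{bmatrix}\right)\le \frac{q+2\sqrt{1-q^2}}{q}\max\{w_q(T+S),\,w_q(T-S)\},
\end{align*}
which is exactly \eqref{eqsymmetric}.

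Alternatively, and perhaps more transparently, I would give a direct self-contained argument: the unitary $\mathbb{U}=\frac{1}{\sqrt2}\begin{bmatrix} I & I\\ I & -I\end{bmatrix}$ diagonalizes the matrix, since a short computation shows $\mathbb{U}\begin{bmatrix} T & S\\ S & T\end{bmatrix}\mathbb{U}^{*}=\begin{bmatrix} T+S & O\\ O & T-S\end{bmatrix}$. Then I would invoke Lemma \ref{Lem0000}(iii) to get $w_q\left(\begin{bmatrix} T & S\\ S & T\end{bmatrix}\right)=w_q\left(\begin{bmatrix} T+S & O\\ O & T-S\end{bmatrix}\right)$, and finally apply Lemma \ref{Lemma1.4} with the pair $(T+S,\,T-S)$ in place of $(T,S)$ to sandwich this quantity between $\max\{w_q(T+S),w_q(T-S)\}$ and $\frac{q+2\sqrt{1-q^2}}{q}\max\{w_q(T+S),w_q(T-S)\}$.

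There is essentially no obstacle here; the only thing to be careful about is the bookkeeping of the exponents of $\omega$ when reading off the two diagonal blocks from Theorem \ref{Thm1} (checking that $\omega^{k(1-i)}$ for $(k,i)\in\{0,1\}\times\{1,2\}$ really produces the signs $+S$ and $-S$), and verifying the $2\times2$ unitary conjugation identity. Both are one-line checks. I would present the proof in the second form, since it makes the structural reason for the two blocks $T\pm S$ visible and uses the already-stated Lemmas \ref{Lem0000} and \ref{Lemma1.4} cleanly. The statement that this recovers \cite[Lemma 5.3]{HSMKID} is then just a remark, requiring no further work.
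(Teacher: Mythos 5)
Your proposal is correct and matches the paper, which obtains Corollary \ref{Cor_3.1} precisely as the $n=2$ case of Theorem \ref{Thm1} (with $\omega=-1$ giving the blocks $T+S$ and $T-S$). Your ``alternative'' direct argument is in fact the same proof, since the $2\times 2$ Fourier unitary used in Theorem \ref{Thm1} reduces to $\frac{1}{\sqrt{2}}\begin{bmatrix} I & I\\ I & -I\end{bmatrix}$ and the final step is the same appeal to the direct-sum estimate of Lemma \ref{Lemma1.4}.
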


\begin{ex}\label{Ex_1}
    Let $T=\frac{1}{10} $ and $S=\frac{1}{24}$, so $\mathbb{A}=\begin{bmatrix}
       \frac{1}{10} & \frac{1}{24}\\
         \frac{1}{24} & \frac{1}{10}
    \end{bmatrix}$ and $q\in [0, 1]$, using Lemma \ref{lem1}, we get 
    $$W_q(\mathbb{A})=\left\{\frac{q}{10}+\frac{r}{24}(\cos (s)+i\sqrt{1-q^2}\sin (s)): 0\leq r\leq 1, 0\leq s\leq 2\pi\right\}.$$
    So, $$w_q(\mathbb{A})=\frac{1}{24}+\frac{q}{10}.$$
   The lower bound of $w_q\left(\mathbb{A}\right)$ in \eqref{eqsymmetric} is $ \max\{w_q(T+S), w_q(T-S)\}=  \max\{\frac{17}{120}q, \frac{7}{120}q\}=\frac{17}{120}q$, while the upper bound is $\frac{17}{120}(q+2\sqrt{1-q^2})$.
The boundary of $W_q(\mathbb{A})$ for $q = 0.5$ and comparison of $w_q(\mathbb{A})$ with its upper and lower bounds have been shown in Figure \ref{Fig-1}.

\begin{figure}[H]
\centering
   {\includegraphics[scale=.48]{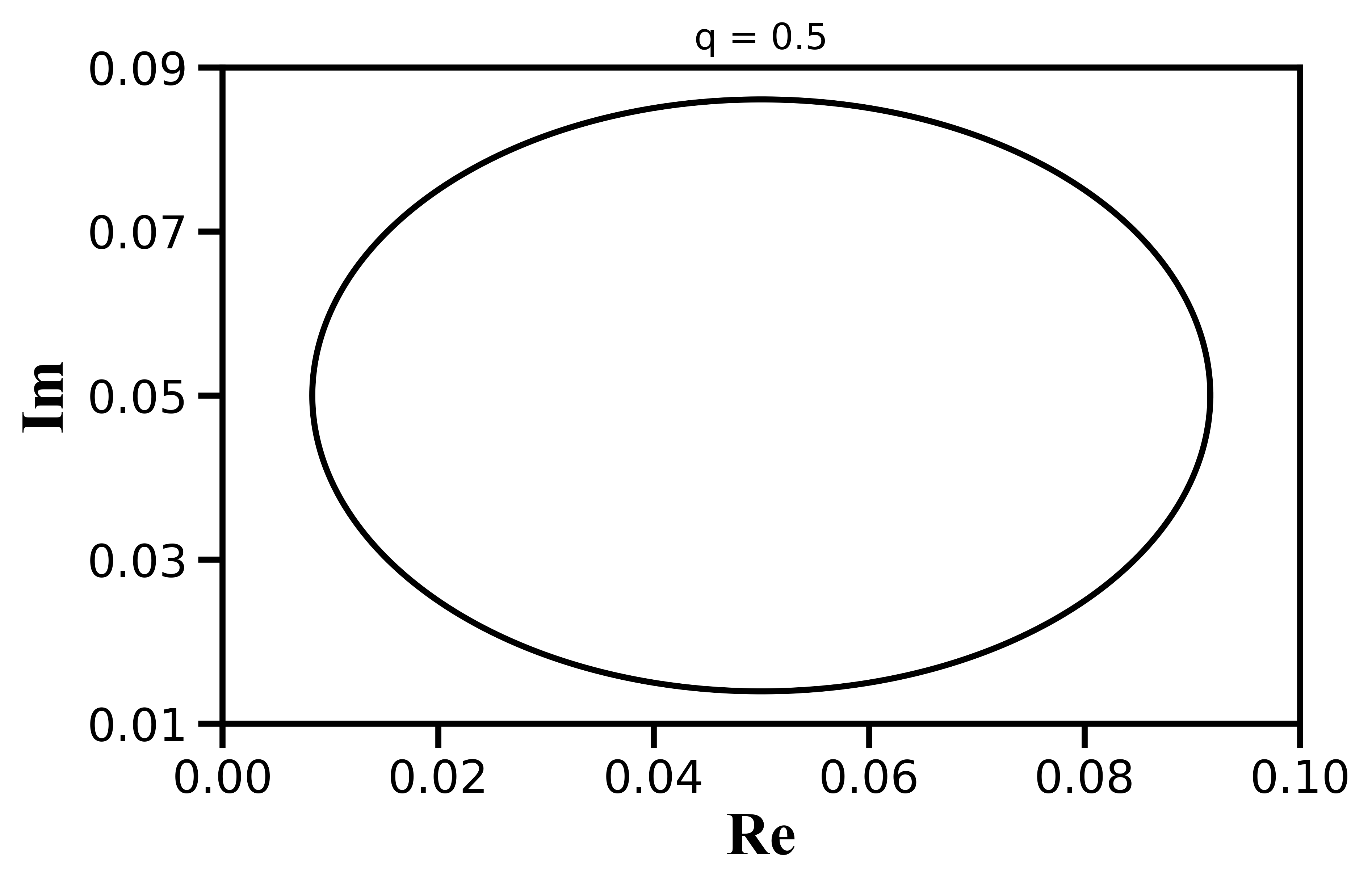}}
 {\includegraphics[scale=.48]{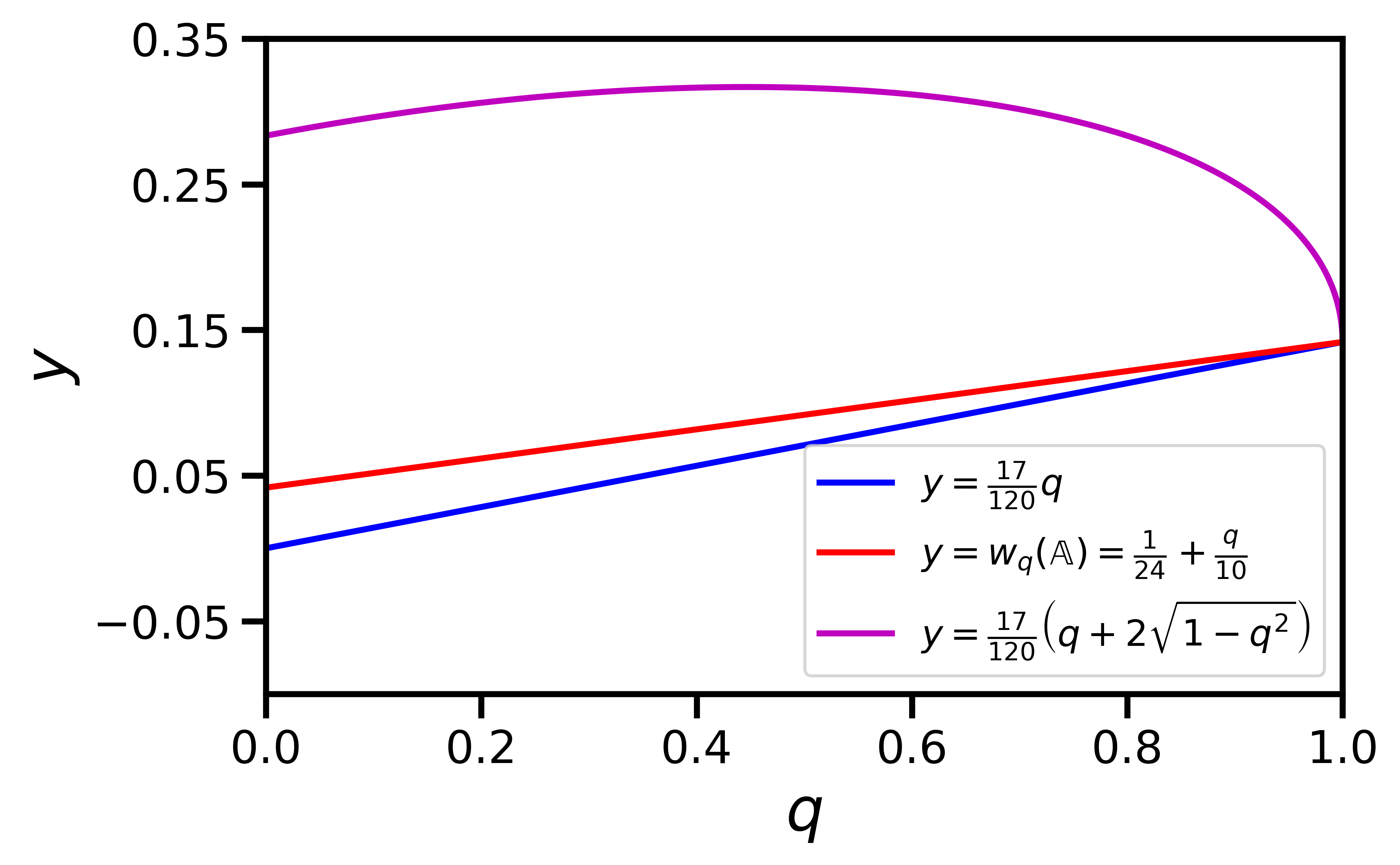}}
\caption{The boundary of the $q$-numerical range of matrix $\mathbb{A}$  for $q = 0.5$ (left column). Comparison of $w_q\left(\mathbb{A}\right) $ with upper and lower bounds \eqref{eqsymmetric}  (right column). Here  $\mathbb{A}=\begin{bmatrix}
       \frac{1}{10} & \frac{1}{24}\\
         \frac{1}{24} & \frac{1}{10}
    \end{bmatrix}$.}	 
\label{Fig-1}
\end{figure}
\end{ex}

  \begin{rem}
      Let $T\in \mathcal{B}(\mathcal{H})$, $q\in (0, 1]$. Then
 		\begin{align}\label{eqsymmetric_1}
          2 w_q(T)\leq w_q\left(\begin{bmatrix}
			T& T\\
			T & T
			\end{bmatrix}\right)\leq \frac{q+2\sqrt{1-q^2}}{q} 2w_q(T).
        \end{align}
  \end{rem}
  \begin{ex}\label{Ex_2}
If $T=I$, $q\in (0, 1]$. Then
	\begin{align}\label{eqsymmetric_2}
          2 w_q(I)\leq w_q\left(\begin{bmatrix}
			I& I\\
			I & I
			\end{bmatrix}\right)\leq \frac{q+2\sqrt{1-q^2}}{q} 2w_q(I).
        \end{align}
       We know from \cite[
Corollary 5.9]{HSMKID} that $w_q\left(\begin{bmatrix}
			I& I\\
			I & I
			\end{bmatrix}\right)=1+q, w_q(I)=q. $
   So from \eqref{eqsymmetric_2} that 
   $$2q\leq 1+q\leq 2(q+2\sqrt{1-q^2}). $$
  \end{ex}
  Let us consider $\mathbb{A}=\begin{bmatrix}
      1& 1\\
			1 & 1
    \end{bmatrix}$, therefore the lower bound of $ w_q\left(\mathbb{A}\right)$ is $2q$ while the upper bound is  $2(q+2\sqrt{1-q^2}).$ 
    
    The boundary of $W_q(\mathbb{A})$ for $q = 0.5$ and comparison of $w_q(\mathbb{A})$ with its upper and lower bounds have been shown in Figure \ref{Fig-2}. The $W_q(\mathbb{A})$ has been estimated using the numerical algorithm based on the definition of $q$-numerical radius (i.e.\eqref{Eq1.3}).

\begin{figure}[H]
\centering
   {\includegraphics[scale=.52]{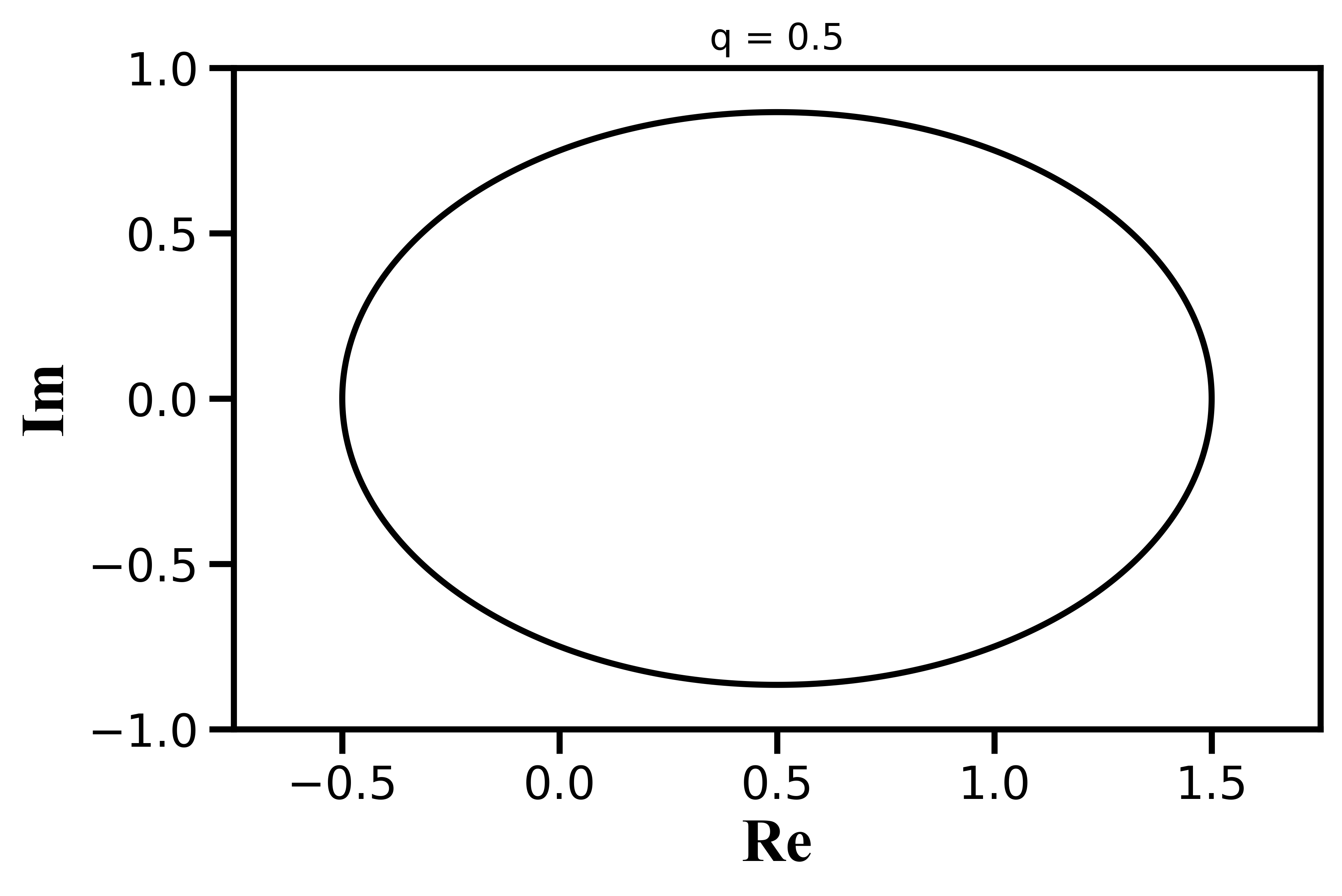}}
  {\includegraphics[scale=.52]{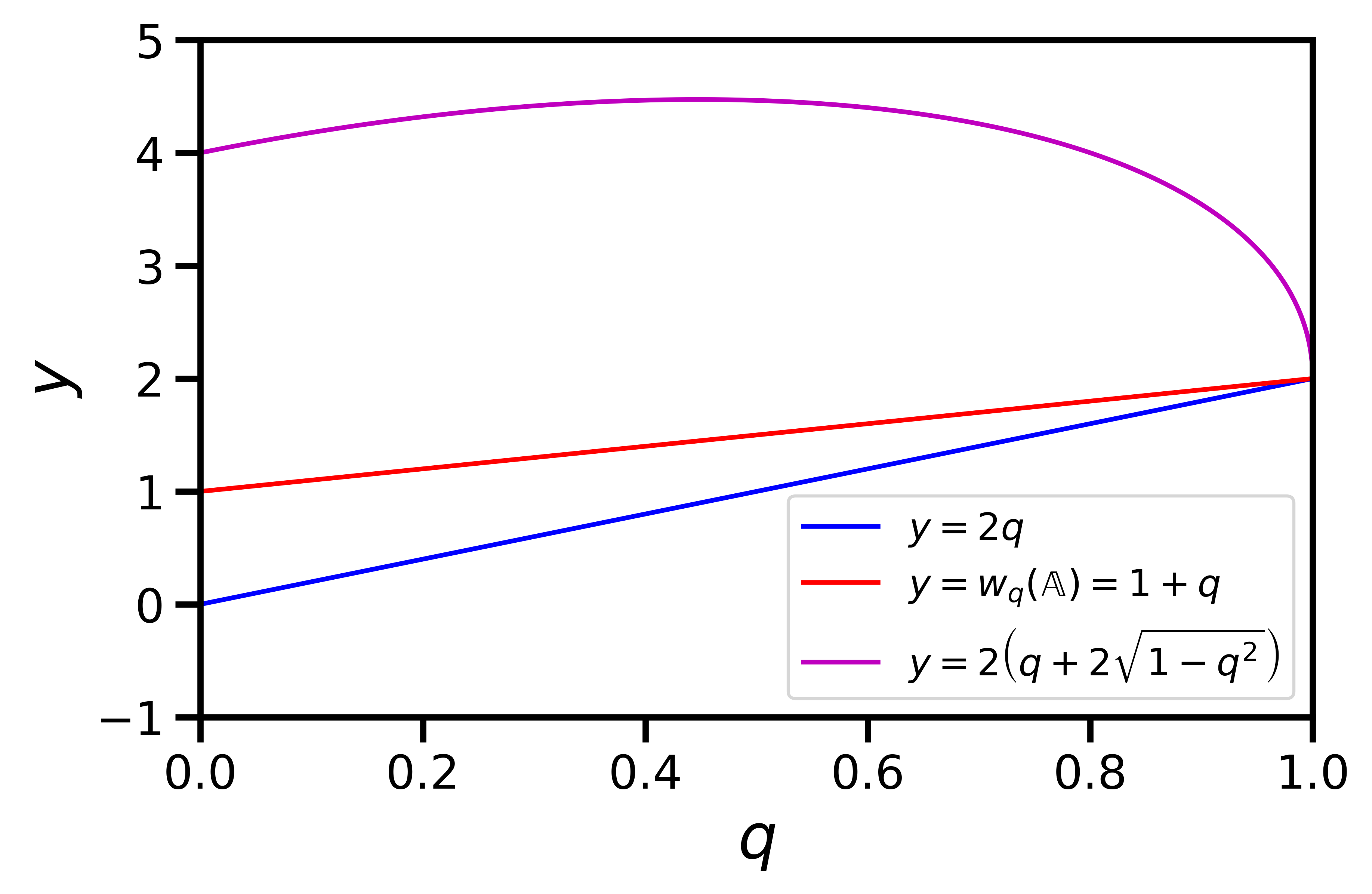}}
\caption{The boundary of the $q$-numerical range of matrix $\mathbb{A}$  for $q = 0.5$ (left column). Comparison of $w_q\left(\mathbb{A}\right) $ with upper and lower bounds \eqref{eqsymmetric_2}  (right column). Here  $\mathbb{A}=\begin{bmatrix}
      1& 1\\
			1 & 1
    \end{bmatrix}$. }	 
\label{Fig-2}
\end{figure}

  \begin{rem}
      For $q= 1$ in Theorem \ref{Thm1}, Corollary \ref{Cor_3.1}, we obtain usual numerical radius equalities proved by Bani-Domi {\it et al.} \cite{BaniKit} and by Hirzallah {\it et al.} \cite[Lemma 2.1 (d)]{TY}. For $\mathbb{A}$-numerical radius version of the above  Theorem \ref{Thm1}, one can see \cite{KITSAT}. 
  \end{rem}

	Our next result is an estimate for $q$-numerical radius of skew circulant operator matrix.
	\begin{theorem}\label{Thm2}
		Let $S_i\in \mathcal{B}(\mathcal{H})$ for $1\leq i\leq n$ and $q\in (0, 1]$. Then
		$$ \max\left\{w_q\bigg(\sum_{i=1}^n(\sigma\omega^k)^{1-i} S_i\bigg)\right\}\leq w_q(\mathbb{S}_{scirc})\leq \frac{q+2\sqrt{1-q^2}}{q}\max\left\{w_q\bigg(\sum_{i=1}^n(\sigma\omega^k)^{1-i} S_i\bigg)\right\},$$  where $ k=0,1,\dots, n-1$, $\sigma=e^{\pi i/n}$ and $\omega =e^{2\pi i/n}$.
	\end{theorem}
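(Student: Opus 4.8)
The plan is to diagonalize the skew circulant operator matrix by a unitary conjugation, exactly in the spirit of the proof of Theorem \ref{Thm1}, and then invoke Lemma \ref{Lemma1.3}. The key structural fact is that every $n\times n$ skew circulant matrix $\mathbb{S}_{scirc}=\mathrm{scirc}(S_1,\dots,S_n)$ can be written as $\mathbb{S}_{scirc}=D^{-1}\mathbb{T}D$ where $\mathbb{T}$ is an ordinary circulant built from the entries $\sigma^{i-1}S_i$ and $D=\mathrm{diag}(I,\sigma I,\sigma^2 I,\dots,\sigma^{n-1}I)$ with $\sigma=e^{\pi i/n}$; equivalently, one checks directly that the ``skew'' sign flip below the diagonal is produced by the fact that $\sigma^n=-1$. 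Since $D$ is unitary (each $\sigma^j$ has modulus $1$), this already shows $w_q(\mathbb{S}_{scirc})=w_q(\mathbb{T})$ by Lemma \ref{Lem0000}(iii), and one could even finish by quoting Theorem \ref{Thm1} applied to $\mathbb{T}$. I would instead carry out the single combined conjugation explicitly for clarity.

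First I would introduce the scaled Fourier-type unitary $\mathbb{U}=\frac{1}{\sqrt n}\big[\,(\sigma\omega^{\,(i-1)(j-1)})I\,\big]_{i,j=1}^{n}$ (or equivalently the composition of the diagonal $D$ above with the unitary $\mathbb{U}$ from the proof of Theorem \ref{Thm1}), and verify $\mathbb{U}\mathbb{U}^*=\mathbb{U}^*\mathbb{U}=I_{\mathcal{B}(\mathcal H^{(n)})}$, which reduces to the classical orthogonality relation $\frac{1}{n}\sum_{t=0}^{n-1}\omega^{t(k-\ell)}=\delta_{k\ell}$. Second, I would compute $\mathbb{U}\,\mathbb{S}_{scirc}\,\mathbb{U}^*$ and show it equals the block-diagonal operator $\bigoplus_{k=0}^{n-1}\sum_{i=1}^n(\sigma\omega^k)^{1-i}S_i$; this is a bookkeeping computation using $\sigma^n=-1$ to absorb the sign changes in the strictly-lower-triangular part of $\mathbb{S}_{scirc}$, entirely parallel to the circulant computation already displayed in the paper. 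Third, using $w_q(\mathbb{S}_{scirc})=w_q(\mathbb{U}\,\mathbb{S}_{scirc}\,\mathbb{U}^*)$ (Lemma \ref{Lem0000}(iii)) together with Lemma \ref{Lemma1.3} applied to the finite direct sum of the $n$ operators $T_k:=\sum_{i=1}^n(\sigma\omega^k)^{1-i}S_i$, both the lower bound $\sup_k w_q(T_k)$ and the upper bound $\frac{q+2\sqrt{1-q^2}}{q}\sup_k w_q(T_k)$ drop out immediately.

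The only genuinely non-routine step is the verification that the similarity by $\mathbb{U}$ (or by the diagonal $D$) really turns the skew circulant into the claimed block-diagonal form — in particular getting the exponent $(\sigma\omega^k)^{1-i}$ and the signs right. The cleanest way to organize this is to note $\mathbb{S}_{scirc}=\sum_{i=1}^n S_i\otimes P^{i-1}$ where $P$ is the $n\times n$ scalar ``skew shift'' (the cyclic shift with a $-1$ in the wrap-around entry), observe that $P$ is unitarily diagonalized as $P=\mathbb{V}^*\,\mathrm{diag}(\sigma\omega^{0},\dots,\sigma\omega^{n-1})\,\mathbb{V}$ with $\mathbb{V}$ the scalar scaled-Fourier matrix, and then conjugation distributes over the sum. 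I expect this to be the main obstacle only in the sense of notational care; conceptually it is the standard fact, cited in the introduction via \cite{Davis}, that every skew circulant is unitarily equivalent to a circulant. I would present it in the condensed "$\mathbb{U}\,\mathbb{S}_{scirc}\,\mathbb{U}^*=\bigoplus_{k=0}^{n-1}\sum_{i=1}^n(\sigma\omega^k)^{1-i}S_i$" form, mirroring the display in the proof of Theorem \ref{Thm1}, and then close with one line citing Lemma \ref{Lemma1.3}.
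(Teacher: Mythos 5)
Your proposal is correct and follows essentially the same route as the paper: conjugate $\mathbb{S}_{scirc}$ by a scaled Fourier-type unitary (equivalently, the diagonal $\mathrm{diag}(I,\sigma I,\dots,\sigma^{n-1}I)$ composed with the circulant-diagonalizing unitary of Theorem \ref{Thm1}) to obtain the block-diagonal operator $\bigoplus_{k=0}^{n-1}\sum_{i=1}^{n}(\sigma\omega^{k})^{1-i}S_i$, and then apply Lemma \ref{Lemma1.3}. Your literal entrywise formula for $\mathbb{U}$ is slightly garbled, but the parenthetical description and the tensor-product reorganization via the skew shift make the intended (and correct) construction clear.
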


	\begin{proof}
		The $n$ roots of the equation $z^n=-1$ are $\sigma, \sigma \omega, \sigma \omega^2, \dots, \sigma \omega^{n-1}$.\\
		Let $\mathbb{S}_{scirc}=\begin{bmatrix}
		S_{1} & S_{2}  & S_{3} & \cdots  &   S_{n}\\
		-S_{n}  & S_{1} &  S_{2}&\cdots  & S_{n-1}\\
		-S_{n-1}  & -S_{n} &  S_{1}&\ddots  &  S_{n-2}\\
		\vdots & \vdots &\ddots & \ddots & \vdots \\
		-S_{2}  & -S_{3}  &\cdots& - S_{n} & S_{1}
		\end{bmatrix}$ 
		and
		\begin{align*}
		\mathbb{U}=\frac{1}{\sqrt{n}}\begin{bmatrix}
		I & \sigma I&\sigma^2 I  & \cdots  &  \sigma^{n-1} I\\
		(\sigma \omega)I & (\sigma \omega)^2 I& (\sigma \omega)^3I  & \cdots  &   (\sigma \omega)^nI\\
		\vdots & \vdots &\vdots & \vdots &\vdots\\
		(\sigma \omega^{n-2})^{n-2}I &  (\sigma \omega^{n-2})^{n-1}I&  (\sigma \omega^{n-2})^nI  & \cdots  &   (\sigma \omega^{n-2})^{2n-1}I\\
		(\sigma \omega^{n-1})^{n-1}I &  (\sigma \omega^{n-1})^{n}I&  (\sigma \omega^{n-1})^{n+1}I  & \cdots  &   (\sigma \omega^{n-1})^{2n-2}I
		\end{bmatrix}.
		\end{align*}
		Using a similar argument as used in the Theorem \ref{Thm1}, we can show that $\mathbb{U}$ is unitary.\\
		Now, 
		\begin{align*}
		\mathbb{US}_{scirc}\mathbb{U}^*=\left(\displaystyle \bigoplus_{k=0}^{n-1}\sum_{i=1}^n(\sigma\omega^k)^{1-i} S_i\right).
		\end{align*}
		Using the property $w_q(\mathbb{S})=w_q(\mathbb{USU}^*)$ for any $\mathbb{S}\in\mathcal{B}(\mathcal{H})$,
		we get
		\begin{align*}
		w_q(\mathbb{S}_{scirc})=w_q(\mathbb{US}_{scirc}\mathbb{U}^*)&=w_q\left(\displaystyle \bigoplus_{k=0}^{n-1}\sum_{i=1}^n(\sigma\omega^k)^{1-i} S_i\right)\\
		&\leq \frac{q+2\sqrt{1-q^2}}{q}\max\left\{w_q\bigg(\sum_{i=1}^n(\sigma\omega^k)^{1-i} S_i\bigg): k=0,1,\dots, n-1\right\}.
		\end{align*}
		Similarly, the left side of the inequality follows from Lemma \ref{Lemma1.3}.
	\end{proof}
	As a special case of the above theorem, we have the following corollary.
	\begin{corollary}\label{Cor_3.2}
		Let $T, S\in \mathcal{B}(\mathcal{H})$ and $q\in (0, 1]$. Then
		\begin{align*}
		\max\{w_q(T+iS),w_q(T-iS)\}\leq w_q\left(\begin{bmatrix}
		T & S\\
		-S & T
		\end{bmatrix}\right)\leq  \frac{q+2\sqrt{1-q^2}}{q}\max\{w_q(T+iS),w_q(T-iS)\}.
		\end{align*}
	\end{corollary}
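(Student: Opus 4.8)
The plan is to obtain this as the special case $n=2$ of Theorem \ref{Thm2}. For $n=2$ the skew circulant matrix $\mathbb{S}_{scirc}=\mathrm{scirc}(T,S)$ is exactly $\begin{bmatrix} T & S \\ -S & T \end{bmatrix}$, so it suffices to compute the two operators $\sum_{i=1}^{2}(\sigma\omega^{k})^{1-i}S_i$ for $k=0,1$ with $S_1=T$, $S_2=S$. Here $\omega=e^{2\pi i/2}=-1$ and $\sigma=e^{\pi i/2}=i$, so the relevant scalars are $\sigma\omega^{0}=i$ and $\sigma\omega^{1}=-i$; the exponent $1-i$ runs over $0$ (giving the coefficient $1$ of $T$) and $-1$ (giving the coefficient $(\sigma\omega^{k})^{-1}$ of $S$). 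Thus the two operators are $T+ i^{-1}S = T - iS$ and $T+(-i)^{-1}S = T + iS$. Feeding these into the conclusion of Theorem \ref{Thm2} yields
\begin{align*}
\max\{w_q(T-iS),\,w_q(T+iS)\}\leq w_q\!\left(\begin{bmatrix} T & S \\ -S & T \end{bmatrix}\right)\leq \frac{q+2\sqrt{1-q^2}}{q}\max\{w_q(T-iS),\,w_q(T+iS)\},
\end{align*}
which is the claimed inequality (the order of the two terms inside $\max$ is immaterial).

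Alternatively, if one prefers a self-contained argument rather than quoting Theorem \ref{Thm2}, I would diagonalize directly: take the unitary $\mathbb{U}=\tfrac{1}{\sqrt{2}}\begin{bmatrix} I & iI \\ iI & I\end{bmatrix}$ (the $n=2$ instance of the Fourier-type unitary in the proof of Theorem \ref{Thm2}, normalized so that $\mathbb{U}\mathbb{U}^{*}=\mathbb{U}^{*}\mathbb{U}=I$), check unitarity by a direct $2\times2$ block multiplication, and verify that
\[
\mathbb{U}\begin{bmatrix} T & S \\ -S & T\end{bmatrix}\mathbb{U}^{*}=\begin{bmatrix} T+iS & O \\ O & T-iS\end{bmatrix}.
\]
Then invoke $w_q(\mathbb{U}\cdot\mathbb{U}^{*})=w_q(\cdot)$ from Lemma \ref{Lem0000}(iii) together with the two-operator block-diagonal estimate in Lemma \ref{Lemma1.4}, applied to the pair $\{T+iS,\,T-iS\}$, to obtain exactly the displayed bounds.

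The only genuinely nontrivial point is the bookkeeping with the roots of $z^{2}=-1$ and the exponent convention $(\sigma\omega^{k})^{1-i}$, i.e.\ making sure the substitution $n=2$, $S_1=T$, $S_2=S$ into Theorem \ref{Thm2} really produces $T\pm iS$ and nothing else; once that is pinned down, the corollary is immediate and there is no real obstacle. I would write the proof in the first form (one line quoting Theorem \ref{Thm2}), since the harder unitary-diagonalization work has already been carried out there.
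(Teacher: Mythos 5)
Your primary argument is exactly the paper's proof: the corollary is stated there as the $n=2$ specialization of Theorem \ref{Thm2}, and your bookkeeping ($\sigma=i$, $\omega=-1$, giving the operators $T-iS$ and $T+iS$) is correct. The only quibble is in your alternative route: with the unitary $\mathbb{U}=\tfrac{1}{\sqrt{2}}\begin{bmatrix} I & iI \\ iI & I\end{bmatrix}$ the conjugation actually yields $\operatorname{diag}(T-iS,\,T+iS)$ rather than $\operatorname{diag}(T+iS,\,T-iS)$, which is immaterial for the final bounds by Lemma \ref{lem0001}(iii).
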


	\begin{remark}
	    For $q= 1$ in Theorem \ref{Thm2}, we obtain usual numerical radius equalities proved by Bani-Domi {\it et al.} \cite{BaniKit}.   For $\mathbb{A}$-numerical radius version of the above  Theorem \ref{Thm2}, one can see \cite{KITSAT}. 
	\end{remark}

	Theorem \ref{Thm3} provides $q$-numerical radius inequalities for imaginary circulant operator matrices.
	\begin{theorem}\label{Thm3}
		Let $S_i\in \mathcal{B}(\mathcal{H})$ for $1\leq i\leq n$ and $q\in (0, 1]$. Then
		$$\max\left\{w_q\bigg(\sum_{i=1}^n(\alpha\omega^k)^{i-1} S_i\bigg)\right\}\leq  w_q(\mathbb{S}_{circ_i}) \leq \frac{q+2\sqrt{1-q^2}}{q}\max\left\{w_q\bigg(\sum_{i=1}^n(\alpha\omega^k)^{i-1} S_i\bigg) \right\},$$ where $\alpha=e^{\pi i/2n}$,  $k=0,1,\dots, n-1$ and $\omega =e^{2\pi i/n}$.
	\end{theorem}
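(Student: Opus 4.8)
The plan is to mimic the proof of Theorem~\ref{Thm1} by producing an explicit unitary operator that block-diagonalizes $\mathbb{S}_{circ_i}$, and then invoking Lemma~\ref{Lemma1.3}. Recall that the imaginary circulant operator matrix is obtained from the ordinary circulant by multiplying the subdiagonal blocks by $i$. The standard fact quoted in the introduction is that every imaginary circulant is unitarily equivalent to a circulant; the refinement needed here is to track \emph{which} circulant. First I would observe that conjugating $\mathbb{S}_{circ_i}$ by the diagonal unitary $\mathbb{D}=\mathrm{diag}(I,\alpha^{-1}I,\alpha^{-2}I,\dots,\alpha^{-(n-1)}I)$ with $\alpha=e^{\pi i/2n}$ turns $\mathbb{S}_{circ_i}$ into $\mbox{circ}(S_1,\alpha S_2,\alpha^2 S_3,\dots,\alpha^{n-1}S_n)$; indeed the factor $\alpha^{2n}=i$ supplied by going once around the cycle is exactly what accounts for the $i$ on the lower triangle, so the conjugated matrix is genuinely circulant. (One should check the entries carefully, distinguishing the $j\ge i$ and $j<i$ cases, since that is where the $\alpha^{2n}=i$ bookkeeping enters — this is the one spot that requires real care.)

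Next I would apply Theorem~\ref{Thm1} to the circulant matrix $\mbox{circ}(\alpha^{0}S_1,\alpha^{1}S_2,\dots,\alpha^{n-1}S_n)$. That theorem gives
\begin{align*}
\max_{k}\, w_q\!\left(\sum_{i=1}^n \omega^{k(1-i)}\alpha^{i-1}S_i\right)\leq w_q\big(\mbox{circ}(\alpha^{0}S_1,\dots,\alpha^{n-1}S_n)\big)\leq \frac{q+2\sqrt{1-q^2}}{q}\max_{k}\, w_q\!\left(\sum_{i=1}^n \omega^{k(1-i)}\alpha^{i-1}S_i\right),
\end{align*}
for $k=0,1,\dots,n-1$. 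Then I would rewrite the inner sum: since $|\omega^{k(1-i)}|=1$, Lemma~\ref{Lem0000}(i) lets me pull out a unimodular scalar, and more to the point $\omega^{k(1-i)}\alpha^{i-1}=(\alpha\omega^{-k})^{i-1}$. Because replacing $k$ by $n-k$ just replaces $\omega^{-k}$ by $\omega^{k}$ and the index $k$ ranges over a full set of residues mod $n$, the family $\{\alpha\omega^{-k}:k=0,\dots,n-1\}$ equals $\{\alpha\omega^{k}:k=0,\dots,n-1\}$, so the maximum over $k$ is unchanged if we write $(\alpha\omega^{k})^{i-1}$. This yields exactly $\max_k w_q\big(\sum_{i=1}^n(\alpha\omega^k)^{i-1}S_i\big)$ on both ends.

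Finally, since $w_q$ is unitarily invariant (Lemma~\ref{Lem0000}(iii)), $w_q(\mathbb{S}_{circ_i})=w_q(\mathbb{D}\,\mathbb{S}_{circ_i}\,\mathbb{D}^*)=w_q\big(\mbox{circ}(\alpha^0S_1,\dots,\alpha^{n-1}S_n)\big)$, and chaining this with the displayed inequalities gives the claim. The main obstacle is purely the first step: verifying that conjugation by $\mathbb{D}$ really produces the claimed circulant, i.e.\ doing the entrywise computation and checking that the ``wrap-around'' factor $\alpha^{2n}=i$ correctly converts the sign/factor pattern of an imaginary circulant into an honest circulant — once that identity is in hand, everything else is a direct appeal to Theorem~\ref{Thm1} and the elementary properties in Lemma~\ref{Lem0000}. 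As a sanity check one can set $n=2$ and recover a Corollary analogous to Corollary~\ref{Cor_3.2}, with $\alpha=e^{\pi i/4}$ and the off-diagonal factor $\alpha^2=i$.
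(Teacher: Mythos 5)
Your argument is correct and reaches the same block-diagonalization as the paper, but it is organized differently: the paper constructs in one step the unitary $\mathbb{U}$ with $(j,k)$ block $\tfrac{1}{\sqrt n}(\alpha\omega^{k-1})^{j-1}I$ and verifies directly that $\mathbb{U}^{*}\mathbb{S}_{circ_i}\mathbb{U}=\bigoplus_{k=0}^{n-1}\sum_{i=1}^{n}(\alpha\omega^{k})^{i-1}S_i$, then applies Lemma~\ref{Lemma1.3}; you instead factor that unitary as (diagonal) $\times$ (Fourier), first conjugating by $\mathbb{D}=\mathrm{diag}(I,\alpha^{-1}I,\dots,\alpha^{-(n-1)}I)$ to land on the honest circulant $\mathrm{circ}(S_1,\alpha S_2,\dots,\alpha^{n-1}S_n)$ and then quoting Theorem~\ref{Thm1} as a black box. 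Your route is more modular (the only new computation is the entrywise check $(\mathbb{D}A\mathbb{D}^{*})_{jk}=\alpha^{k-j}A_{jk}$, and the re-indexing $\omega^{k(1-i)}\alpha^{i-1}=(\alpha\omega^{-k})^{i-1}$ together with the observation that $\{\omega^{-k}\}_k=\{\omega^{k}\}_k$ is exactly right), whereas the paper's route avoids the re-indexing at the cost of re-deriving the diagonalization. One slip to fix: the wrap-around factor you invoke is $\alpha^{n}=e^{\pi i/2}=i$, not $\alpha^{2n}$ (which equals $-1$); concretely, for $k<j$ the conjugated entry is $\alpha^{k-j}\,iS_{n+k-j+1}=\alpha^{m-1}\alpha^{-n}iS_m=\alpha^{m-1}S_m$ with $m=n+k-j+1$, using $\alpha^{-n}i=1$. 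Your own $n=2$ sanity check ($\alpha^{2}=i$) already uses the correct exponent, so this is only a typo in the prose, not a gap in the argument.
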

	\begin{proof}
		
		The $n$ roots of the equation $z^n=i$ are $\alpha, \alpha \omega, \alpha \omega^2, \dots, \alpha \omega^{n-1}$.\\
		Let $\mathbb{S}_{circ_i}=\begin{bmatrix}
		S_{1} & S_{2}  &S_{3} & \cdots  &   S_{n}\\
		iS_{n}  & S_{1} &  S_{2}&\cdots  & S_{n-1}\\
		iS_{n-1}  & iS_{n} &  S_{1}&\ddots  &  S_{n-2}\\
		\vdots & \vdots &\ddots & \ddots & \vdots \\
		iS_{2}  & iS_{3}  &\cdots& i S_{n} & S_{1}
		\end{bmatrix}$ 
		and
		\begin{align*}
		\mathbb{U}=\frac{1}{\sqrt{n}}\begin{bmatrix}
		I &  I& I  & \cdots  &   I\\
		\alpha I & \alpha \omega I& \alpha \omega^2I  & \cdots  &   \alpha\omega^{n-1}I\\
		\alpha^2 I & (\alpha \omega)^2 I& (\alpha \omega^2)^2I  & \cdots  &   (\alpha\omega^{n-1})^2I\\
		\vdots & \vdots &\vdots & \vdots &\vdots\\
		\alpha^{n-1} I & (\alpha \omega)^{n-1} I& (\alpha \omega^2)^{n-1}I  & \cdots  &   (\alpha\omega^{n-1})^{n-1}I
		\end{bmatrix}.
		\end{align*}
		Using a similar argument as used in the Theorem \ref{Thm1}, we can show that $\mathbb{U}$ is unitary.\\
		Now, we have
		\begin{align*}
		\mathbb{U}^{*}\mathbb{S}_{circ_i}\mathbb{U}=\left(\displaystyle \bigoplus_{k=0}^{n-1}\sum_{i=1}^n(\alpha\omega^k)^{i-1} S_i\right).
		\end{align*}
		Using the property $w_q(\mathbb{S})=w_q(\mathbb{U}^*\mathbb{SU})$ for any $\mathbb{S}\in\mathcal{B}(\mathcal{H})$,
		we get
		\begin{align*}
		w_q(\mathbb{S}_{circ_i})=w_q(\mathbb{S}_{circ_i})=w_q(\mathbb{U}^{*}\mathbb{S}_{circ_i}\mathbb{U})&=w_q\left(\displaystyle \bigoplus_{k=0}^{n-1}\sum_{i=1}^n(\alpha\omega^k)^{i-1} S_i\right)\\
		&=w_q\left(\displaystyle \bigoplus_{k=0}^{n-1}\sum_{i=1}^n(\alpha\omega^k)^{i-1} S_i\right)\\
		&\leq \frac{q+2\sqrt{1-q^2}}{q}\max\left\{w_q\bigg(\sum_{i=1}^n(\alpha\omega^k)^{i-1} S_i\bigg): k=0,1,\dots, n-1\right\}.
		\end{align*}
		Similarly, the left side of the inequality follows from Lemma \ref{Lemma1.3}.
	\end{proof}
	\begin{rem}
	      For $q= 1$ in Theorem \ref{Thm3}, we obtain equalities for the usual numerical radius proved by Bani-Domi {\it et al.} \cite{BaniKit} and for $\mathbb{A}$-numerical radius version of the above  Theorem \ref{Thm3}, one can see \cite{KITSAT}. 
	\end{rem}
	As a special case of the above theorem, we have the following corollary.
	\begin{corollary}\label{Cor_3.3}
		Let	$S_1,S_2\in {\mathcal{B}}(\mathcal{H})$ and $q\in (0, 1]$.  Then
		\begin{align*}
	\max\left\{w_q\bigg(S_1+\frac{1+i}{\sqrt{2}}S_2\bigg), w_q\bigg(S_1-\frac{1+i}{\sqrt{2}}S_2\bigg)\right\}&\leq	w_q\left(\begin{bmatrix}
		S_1 & S_2\\
		iS_2 & S_1
		\end{bmatrix}\right)\\
 & \leq \frac{q+2\sqrt{1-q^2}}{q}\max\left\{w_q\bigg(S_1+\frac{1+i}{\sqrt{2}}S_2\bigg), w_q\bigg(S_1-\frac{1+i}{\sqrt{2}}S_2\bigg)\right\}.
		\end{align*}
	\end{corollary}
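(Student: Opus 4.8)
The plan is to obtain Corollary \ref{Cor_3.3} as the case $n=2$ of Theorem \ref{Thm3}. First I would record the relevant constants: for $n=2$ we have $\alpha = e^{\pi i/2n} = e^{\pi i/4} = \frac{1+i}{\sqrt{2}}$ and $\omega = e^{2\pi i/n} = e^{\pi i} = -1$, while the index $k$ in Theorem \ref{Thm3} ranges only over $\{0,1\}$. Writing the imaginary circulant matrix $\mathbb{S}_{circ_i} = \mathrm{circ}_i(S_1,S_2)$ out for two operators gives exactly $\begin{bmatrix} S_1 & S_2\\ iS_2 & S_1\end{bmatrix}$, which is the matrix in the statement.

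Next I would evaluate the two operators $\sum_{i=1}^2 (\alpha\omega^k)^{i-1}S_i$ appearing in Theorem \ref{Thm3}. For $k=0$ this equals $S_1 + \alpha S_2 = S_1 + \frac{1+i}{\sqrt{2}}S_2$, and for $k=1$ it equals $S_1 + \alpha\omega\, S_2 = S_1 - \frac{1+i}{\sqrt{2}}S_2$, since $\alpha\omega = -\alpha$. Substituting these two expressions into the two-sided bound of Theorem \ref{Thm3} produces precisely the asserted inequalities, with the maximum taken over $\bigl\{ w_q\bigl(S_1+\tfrac{1+i}{\sqrt{2}}S_2\bigr),\, w_q\bigl(S_1-\tfrac{1+i}{\sqrt{2}}S_2\bigr)\bigr\}$.

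There is essentially no obstacle: the only verifications needed are the elementary identities $e^{\pi i/4} = \frac{1+i}{\sqrt{2}}$ and $e^{\pi i} = -1$, plus the observation that the $2\times 2$ imaginary circulant has the stated shape. If a self-contained argument is preferred, one can instead reprove it directly by conjugating $\begin{bmatrix} S_1 & S_2\\ iS_2 & S_1\end{bmatrix}$ with the unitary $\mathbb{U} = \frac{1}{\sqrt{2}}\begin{bmatrix} I & I\\ \alpha I & \alpha\omega I\end{bmatrix}$ from the proof of Theorem \ref{Thm3}, checking that $\mathbb{U}^*\mathbb{S}_{circ_i}\mathbb{U} = \bigl(S_1+\tfrac{1+i}{\sqrt{2}}S_2\bigr) \oplus \bigl(S_1-\tfrac{1+i}{\sqrt{2}}S_2\bigr)$, and then applying the unitary invariance of Lemma \ref{Lem0000}(iii) together with the two-operator block-diagonal estimate of Lemma \ref{Lemma1.4}.
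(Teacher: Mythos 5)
Your proposal is correct and matches the paper exactly: the paper obtains Corollary \ref{Cor_3.3} as the case $n=2$ of Theorem \ref{Thm3}, and your computation of $\alpha=e^{\pi i/4}=\tfrac{1+i}{\sqrt{2}}$, $\omega=-1$, and the two sums for $k=0,1$ is precisely the specialization left implicit there. The alternative direct conjugation you sketch is just the $n=2$ instance of the unitary used in the proof of Theorem \ref{Thm3}, so it is the same argument in expanded form.
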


	In Theorem \ref{Thm4}, we give an estimate for $q$-numerical radius of imaginary skew circulant operator matrices.
	
	\begin{theorem}\label{Thm4}
		Let $S_i\in \mathcal{B}(\mathcal{H})$ for $1\leq i\leq n$ and $q\in (0, 1]$. Then
		$$ \max\left\{w_q\bigg(\sum_{i=1}^n(\beta\omega^k)^{1-i} S_i\bigg)\right\}\leq w_q(\mathbb{S}_{scirc_i})\leq \frac{q+2\sqrt{1-q^2}}{q}\max\left\{w_q\bigg(\sum_{i=1}^n(\beta\omega^k)^{1-i} S_i\bigg)\right\},$$ where $\beta=e^{\frac{-\pi i}{2n}}$, $k=0,1,\dots, n-1$ and $\omega =e^{2\pi i/n}$.
	\end{theorem}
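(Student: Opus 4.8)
The plan is to follow the template established for the circulant, skew circulant, and imaginary circulant cases (Theorems \ref{Thm1}, \ref{Thm2}, \ref{Thm3}): to exhibit an explicit unitary $\mathbb{U}\in\mathcal{B}(\mathcal{H}^{(n)})$ which conjugates $\mathbb{S}_{scirc_i}$ into the block-diagonal operator matrix $\bigoplus_{k=0}^{n-1}\sum_{i=1}^n(\beta\omega^k)^{1-i}S_i$, and then to apply the two-sided direct-sum bound of Lemma \ref{Lemma1.3}. The construction is driven by the elementary fact that, since $\beta=e^{-\pi i/(2n)}$, one has $\beta^n=e^{-\pi i/2}=-i$, so the $n$ solutions of $z^n=-i$ are exactly $\beta,\ \beta\omega,\ \beta\omega^2,\ \dots,\ \beta\omega^{n-1}$, with $\omega=e^{2\pi i/n}$; the equation $z^n=-i$ is precisely the one dictated by combining the factor $i$ of the \emph{imaginary} structure with the sign change below the diagonal of the \emph{skew} structure.

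Next I would take $\mathbb{U}=\frac{1}{\sqrt{n}}\big[(\beta\omega^{k})^{\,k+j}\,I\big]_{0\le k,j\le n-1}$, i.e.\ the same Vandermonde-type matrix used in the proof of Theorem \ref{Thm2} with the root $\sigma$ of $z^n=-1$ replaced by the root $\beta$ of $z^n=-i$. Unitarity is checked exactly as there: writing $(\mathbb{U}\mathbb{U}^*)_{k\ell}=\frac{1}{n}\sum_{j=0}^{n-1}(\beta\omega^k)^{k+j}\overline{(\beta\omega^\ell)^{\ell+j}}$ and using that the $j$-dependent factor $(\beta\omega^k)^{j}\overline{(\beta\omega^\ell)^{j}}$ equals $\omega^{(k-\ell)j}$, this reduces to $\frac{1}{n}(\beta\omega^k)^{k}\overline{(\beta\omega^\ell)^{\ell}}\sum_{j=0}^{n-1}\omega^{(k-\ell)j}$, which is $1$ for $k=\ell$ and $0$ otherwise by the geometric-sum identity for roots of unity; likewise $\mathbb{U}^*\mathbb{U}=I$.

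The heart of the argument is then to verify that $\mathbb{U}\,\mathbb{S}_{scirc_i}\,\mathbb{U}^*=\bigoplus_{k=0}^{n-1}\sum_{i=1}^n(\beta\omega^k)^{1-i}S_i$ (if the exponents emerge with the opposite sign one conjugates by $\mathbb{U}^*$ instead, which is harmless by Lemma \ref{Lem0000}(iii)). Here the scalar $i=-\beta^n=-(\beta\omega^k)^n$ carried by the entries strictly below the diagonal is precisely what absorbs the wrap-around of the exponents modulo $n$, so that each diagonal block collapses to $\sum_{i=1}^n(\beta\omega^k)^{1-i}S_i$ while every off-diagonal block telescopes to $O$ against the vanishing off-diagonal sums of $\mathbb{U}\mathbb{U}^*$. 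I expect this exponent bookkeeping -- tracking the powers of $\beta\omega^k$ through the below-diagonal terms and confirming that the factor $i$ supplies exactly the missing phase -- to be the only genuine obstacle; it is routine but must be carried out with the same care as in Theorems \ref{Thm2} and \ref{Thm3}.

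Finally, combining $w_q(\mathbb{S}_{scirc_i})=w_q(\mathbb{U}\,\mathbb{S}_{scirc_i}\,\mathbb{U}^*)$ from Lemma \ref{Lem0000}(iii) with Lemma \ref{Lemma1.3} applied to the direct sum $\bigoplus_{k=0}^{n-1}\sum_{i=1}^n(\beta\omega^k)^{1-i}S_i$ -- all summand spaces being copies of $\mathcal{H}$, so that the supremum over the finite index set is just the maximum over $k=0,\dots,n-1$ -- yields
$$\max_{0\le k\le n-1}w_q\Big(\sum_{i=1}^n(\beta\omega^k)^{1-i}S_i\Big)\ \le\ w_q(\mathbb{S}_{scirc_i})\ \le\ \frac{q+2\sqrt{1-q^2}}{q}\,\max_{0\le k\le n-1}w_q\Big(\sum_{i=1}^n(\beta\omega^k)^{1-i}S_i\Big),$$
which is the asserted inequality.
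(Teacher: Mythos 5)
Your overall strategy is the right one and is exactly the paper's: block-diagonalize $\mathbb{S}_{scirc_i}$ by an explicit unitary built from the $n$-th roots of $-i$, then apply Lemma \ref{Lemma1.3}. But the specific unitary you choose, together with the step you defer as ``routine bookkeeping,'' contains a genuine gap. You take $\mathbb{U}=\frac{1}{\sqrt n}\bigl[(\beta\omega^{k})^{k+j}I\bigr]_{k,j}$, i.e.\ the Theorem \ref{Thm2} template with $\sigma$ replaced by $\beta$. That template works for the skew circulant because its $k$-th row, $(\beta\omega^k)^{k}\bigl(I,(\beta\omega^k)I,\dots,(\beta\omega^k)^{n-1}I\bigr)$, is a \emph{left} eigen-row of the $\phi$-cyclic shift precisely when $(\beta\omega^k)^n=\phi^{-1}$; for $\phi=-1$ this coincides with $(\sigma\omega^k)^n=\phi$, but for the imaginary skew circulant one has $\phi=-i$, so one would need $(\beta\omega^k)^n=i$, whereas $\beta^n=-i$. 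Consequently, for $n\ge 3$ neither $\mathbb{U}\mathbb{S}_{scirc_i}\mathbb{U}^*$ nor $\mathbb{U}^*\mathbb{S}_{scirc_i}\mathbb{U}$ is block diagonal with your $\mathbb{U}$ (the columns $\bigl((\beta\omega^k)^{k+j}\bigr)_k$ are not geometric in $k$, so they are not right eigenvectors either). The paper instead reuses the Theorem \ref{Thm3} template, $\mathbb{U}=\frac{1}{\sqrt n}\bigl[(\beta\omega^{k})^{j}I\bigr]_{j,k}$, whose columns are genuine right eigenvectors of the $(-i)$-shift, and conjugates as $\mathbb{U}^{*}\mathbb{S}_{scirc_i}\mathbb{U}$.

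There is a second, related problem with the exponent. With any unitary that does diagonalize, the blocks that emerge are $\sum_{i=1}^{n}(\beta\omega^k)^{i-1}S_i$, with exponent $i-1$, and your fallback of ``conjugating by $\mathbb{U}^*$ instead'' cannot flip that sign: whichever direction of conjugation succeeds, the diagonal blocks are the eigen-blocks. In Theorems \ref{Thm1} and \ref{Thm2} the distinction between $i-1$ and $1-i$ is invisible because the relevant root sets ($n$-th roots of $1$ or of $-1$) are closed under complex conjugation; the $n$-th roots of $-i$ are not, so $\bigl\{\sum_i(\beta\omega^k)^{1-i}S_i\bigr\}_k$ and $\bigl\{\sum_i(\beta\omega^k)^{i-1}S_i\bigr\}_k$ are genuinely different families --- already for $n=2$ they are $S_1\pm\frac{1+i}{\sqrt 2}S_2$ versus $S_1\pm\frac{1-i}{\sqrt 2}S_2$, and the paper's own Corollary \ref{Cor_3.4} records the latter. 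So the bookkeeping you postpone would in fact fail to produce the displayed exponent $1-i$; what this method actually proves is the inequality with exponent $i-1$.
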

	\begin{proof}
		
		The $n$ roots of the equation $z^n=-i$ are $\beta, \beta \omega, \beta \omega^2, \dots, \beta \omega^{n-1}$.\\
		Let \begin{align*}
	\mathbb{T}_{scirc_i}=\begin{bmatrix}
	S_{1} & S_{2}  & S_{3} & \cdots  &   S_{n}\\
	-iS_{n}  & S_{1} &  S_{2}&\cdots  & S_{n-1}\\
	-iS_{n-1}  & -iS_{n} &  S_{1}&\ddots  &  S_{n-2}\\
	\vdots & \vdots &\ddots & \ddots & \vdots \\
	-iS_{2}  & -iS_{3}  &\cdots& -i S_{n} & S_{1}
	\end{bmatrix}
		\end{align*}
		and
		\begin{align*}
		\mathbb{U}=\frac{1}{\sqrt{n}}\begin{bmatrix}
		I &  I& I  & \cdots  &   I\\
		\beta I & \beta \omega I& \beta \omega^2I  & \cdots  &   \beta\omega^{n-1}I\\
		\beta^2 I & (\beta \omega)^2 I& (\beta \omega^2)^2I  & \cdots  &   (\beta\omega^{n-1})^2I\\
		\vdots & \vdots &\vdots & \vdots &\vdots\\
		\beta^{n-1} I & (\beta \omega)^{n-1} I& (\beta \omega^2)^{n-1}I  & \cdots  &   (\beta\omega^{n-1})^{n-1}I
		\end{bmatrix}.
		\end{align*}
		The rest of the proof follows using a similar method as used in Theorem \ref{Thm3}.
	\end{proof}
	\begin{remark}
	     For $q= 1$ in Theorem \ref{Thm4}, we obtain equalities for the usual numerical radius proved by Bani-Domi {\it et al.} \cite{BaniKit} and for $\mathbb{A}$-numerical radius version of the above  Theorem \ref{Thm4}, one can see \cite{KITSAT}. 
	\end{remark}
    We finish the paper by providing $q$-numerical radius of $2\times 2$ operator matrix. 
	As a special case of the above theorem, we have the following corollary.
	\begin{corollary}\label{Cor_3.4}
		Let	$S_1,S_2\in {\mathcal{B}}(\mathcal{H})$ and $q\in (0, 1]$.  Then
		\begin{align*}
  \max\left\{w_q\bigg(S_1+\frac{1-i}{\sqrt{2}}S_2\bigg), w_q\bigg(S_1-\frac{1-i}{\sqrt{2}}S_2\bigg)\right\}&\leq 
		w_q\left(\begin{bmatrix}
		S_1 & S_2\\
		-iS_2 & S_1
		\end{bmatrix}\right)\\
  &\leq \frac{q+2\sqrt{1-q^2}}{q}\max\left\{w_q\bigg(S_1+\frac{1-i}{\sqrt{2}}S_2\bigg), w_q\bigg(S_1-\frac{1-i}{\sqrt{2}}S_2\bigg)\right\}.
		\end{align*}
	\end{corollary}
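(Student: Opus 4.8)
The plan is to recognize Corollary~\ref{Cor_3.4} as the case $n=2$ of Theorem~\ref{Thm4} and to record the (short) self-contained form of that specialization. For $n=2$ the imaginary skew circulant matrix built from $S_1,S_2$ is exactly $\mathbb{S}_{scirc_i}=\begin{bmatrix}S_1 & S_2\\ -iS_2 & S_1\end{bmatrix}$, the operator matrix in the statement. Write $\mathbb{S}_{scirc_i}=I_2\otimes S_1+A\otimes S_2$ with $A=\begin{bmatrix}0&1\\ -i&0\end{bmatrix}$; the crux is that $A$ is unitary ($AA^{*}=A^{*}A=I_2$), hence unitarily diagonalizable, with eigenvalues the two square roots of $-i$, namely $\lambda_1=e^{-\pi i/4}=\tfrac{1-i}{\sqrt2}$ and $\lambda_2=-e^{-\pi i/4}=-\tfrac{1-i}{\sqrt2}$ (these are the numbers $\beta\omega^{k}$, $k=0,1$, of Theorem~\ref{Thm4} with $\beta=e^{-\pi i/4}$ and $\omega=e^{\pi i}=-1$).

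Next I would make the conjugating unitary explicit. The normalized eigenvectors $\tfrac{1}{\sqrt2}[1,\lambda_j]^{T}$ of $A$ are orthogonal because $A$ is normal, so $V=\tfrac{1}{\sqrt2}\begin{bmatrix}1&1\\ \lambda_1&\lambda_2\end{bmatrix}$ is unitary and $V^{*}AV=\mathrm{diag}(\lambda_1,\lambda_2)$. Let $\mathbb{V}\in\mathcal{B}(\mathcal{H}^{(2)})$ be the operator matrix obtained from $V$ by replacing each scalar entry $v$ by $vI$; then $\mathbb{V}$ is unitary on $\mathcal{H}^{(2)}$ and, since conjugation fixes the identity block, $\mathbb{V}^{*}\mathbb{S}_{scirc_i}\mathbb{V}=(V^{*}I_2V)\otimes S_1+(V^{*}AV)\otimes S_2=\begin{bmatrix}S_1+\tfrac{1-i}{\sqrt2}S_2 & O\\ O & S_1-\tfrac{1-i}{\sqrt2}S_2\end{bmatrix}$. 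By the unitary invariance of the $q$-numerical radius (Lemma~\ref{Lem0000}(iii)), $w_q(\mathbb{S}_{scirc_i})$ equals the $q$-numerical radius of this block-diagonal operator, and Lemma~\ref{Lemma1.4} applied to it yields simultaneously the lower bound $\max\{w_q(S_1+\tfrac{1-i}{\sqrt2}S_2),\,w_q(S_1-\tfrac{1-i}{\sqrt2}S_2)\}$ and the upper bound obtained by multiplying that maximum by $\tfrac{q+2\sqrt{1-q^2}}{q}$, which is precisely the asserted chain of inequalities.

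There is no genuine obstacle: this is a routine specialization, and the only point requiring care is the root-of-unity bookkeeping, i.e.\ checking that the diagonal blocks emerge as $S_1\pm\tfrac{1-i}{\sqrt2}S_2$ rather than as their complex conjugates — which is why I would diagonalize $A$ by hand instead of quoting a root formula. An even shorter write-up simply cites Theorem~\ref{Thm4} with $n=2$, substitutes $\beta=e^{-\pi i/4}$ and $\omega=-1$, and simplifies $\{\beta\omega^{k}\}_{k=0}^{1}=\{\pm\tfrac{1-i}{\sqrt2}\}$; the argument above is just the unrolling of that citation.
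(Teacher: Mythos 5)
Your proof is correct and follows the same route as the paper: the paper offers no separate argument for Corollary~\ref{Cor_3.4} beyond ``special case of the above theorem,'' and the proof of Theorem~\ref{Thm4} itself is exactly your mechanism (conjugate by a scalar-block unitary to reach $\bigl(S_1+\lambda_1 S_2\bigr)\oplus\bigl(S_1+\lambda_2 S_2\bigr)$, then apply Lemma~\ref{Lemma1.4}); your version just makes the $2\times 2$ unitary $V$ and the eigenvalue computation explicit. One remark worth keeping: your decision to diagonalize $A=\begin{bmatrix}0&1\\-i&0\end{bmatrix}$ by hand is not merely cosmetic. The literal $n=2$ specialization of Theorem~\ref{Thm4} as printed involves the coefficients $(\beta\omega^{k})^{1-i}$, so the coefficient of $S_2$ is $(\beta\omega^{k})^{-1}=\pm\frac{1+i}{\sqrt{2}}$, not $\beta\omega^{k}=\pm\frac{1-i}{\sqrt{2}}$; quoting the theorem verbatim would therefore produce $S_1\pm\frac{1+i}{\sqrt{2}}S_2$, which disagrees with the corollary. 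Your direct computation confirms that the eigenvalues of $A$ are $\pm\frac{1-i}{\sqrt{2}}$, i.e.\ that the corollary as stated is the correct one and that the exponent in Theorem~\ref{Thm4} should read $(\beta\omega^{k})^{i-1}$ (matching Theorem~\ref{Thm3}). So your ``even shorter write-up'' via direct citation would inherit that typo, while the argument you actually wrote does not.
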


	\section{Conclusion}
   
In this paper, we have presented different types of new $q$-numerical radius inequalities, which depend on the structure of circulant, skew circulant, imaginary circulant, imaginary skew circulant, tridiagonal, and anti-tridiagonal operator matrices.  
	\par

    The exploration of $q$-numerical radius inequalities for 
$n\times n$ circulant, skew circulant, imaginary circulant, and imaginary skew circulant matrices involves a deep understanding of their properties and how the parameter 
$q$ modifies these properties. The $q$-numerical radius can provide important insights into the stability and behavior of these matrices in various applications in linear algebra and operator theory.

Advanced inequalities involving the $q$-numerical radius would mainly depend on the specific structure and properties of the matrix (e. g., circulant, skew circulant) and the values of $q$ varying from $0$ and $1$, and they would be derived by examining the properties of these matrices and their interactions with the chosen inner product.
By employing similar method to different special operator matrices such as left circulant, skew left circulant, left imaginary circulant, and left imaginary skew circulant operator matrices, as defined in \cite{Daptari_Kittaneh_Sahoo_2024}, it is possible to obtain further $q$-numerical radius inequalities. Future research on this subject could lead to the study of a fascinating field for further investigation.

\small{\bf Data availability:}
The authors declare that data sharing is not applicable to this article as no datasets were
generated or analysed during the current study.\\
\noindent
\\
\small{\bf Conflict of interest:} The authors declare that there is no conflict of interest.

	\bibliographystyle{amsplain}

\begin{thebibliography}{10}



		\bibitem{AUK}Audenaert, K., \textit{A norm compression inequality for block partitioned positive semidefinite matrices}, Linear Algebra Appl. {\bf 413} (2006), 155--176.
		
		
		\bibitem{BaniKit}{Bani-Domi, W., Kittaneh, F.}, \textit{Norm equalities and inequalities for operator matrices}, Linear Algebra Appl.
		{\bf 429} (2008),  57--67.
        
	\bibitem{BaniKitShat}{Bani-Domi, W., Kittaneh, F., Shatnawi, M.}, \textit{New norm equalities and inequalities for certain operator matrices}, Math. Inequal. Appl.
		{\bf 23} (2020),  1041--1050.
		
		
		\bibitem{BhatiaKit} Bhatia, R., Kittaneh, F., \textit{ Norm inequalities for partitioned operators and an application}, Math. Ann. {\bf 287} (1990), 719--726.
		
		\bibitem{BoseHS} Bose, A., Hazra, R. S., Saha, K., \textit{Spectral norm of circulant-type matrices}, J. Theoret. Probab. {\bf 24} (2011), 479--516.
		
		
		\bibitem{BhatiaKLi} Bhatia, R., Kahan, W., Li, R., \textit{Pinchings and norms of block scaled triangular matrices}, Linear Multilinear Algebra, {\bf 50} (2002), 15--21.

	\bibitem{PINTU} Bhunia, P., Paul, K., Nayak, R. K., \textit{On inequalities for A-numerical radius of operators}, Electron. J. Linear Algebra, \textbf{36} (2020), 143--157.
		

		
		
     \bibitem{Bhunia_Sahoo_2025} Bhunia, P., Sahoo, S., 
 \textit{Schatten $p$-norm and numerical radius inequalities with applications},  Results Math, \textbf{80}, 15 (2025). https://doi.org/10.1007/s00025-024-02314-0

\bibitem{ChienMT} Chien, M. T., Nakazato H., \textit{Davis-Wielandt shell of tridiagonal matrices}, Linear Algebra Appl.
{\bf 340} (2002), 15–31.


\bibitem{ChienMTNakazatoH} Chien, M.T., Nakazato H., \textit{The q-numerical radius of weighted shift operators with periodic
weights},  Linear Algebra Appl. {\bf 422} (2007), 198–218.

\bibitem{ChienMT2012}  Chien, M.T., \textit{The numerical radius of a weighted shift operator},  RIMS Kôkyûroku, {\bf 1778} (2012), 70–77.

 
 \bibitem{Daptari_Kittaneh_Sahoo_2024}  Daptari, S.,  Kittaneh, F., Sahoo, S.,  \textit{New ${\mathbb{A}}$-numerical radius equalities and inequalities for certain operator matrices and applications},  Results Math, 2024, 10.1007/s00025-024-02325-x
 
\bibitem{Davis} Davis, P. J.,  \textit{Circulant Matrices}, Chelsea Publishing, New York, 1994
    
\bibitem{Gau} Gau H. L., Wu P.Y., Numerical Ranges of Hilbert Space Operators. Cambridge University
Press; 2021.

 \bibitem{GRO} Gustafson, K. E., M. Rao, D. K.,  \textit{Numerical Range}: The field of values of linear operators and matrices, Universitext, Springer, New York, NY, USA,1997.

	\bibitem{HPH} Haagerup, U., de la Harpe, P., \textit{The numerical radius of a nilpotent operator on a Hilbert space}, Proceedings of the American Mathematical Society, {\bf 115}(2) (1992), 371-379.



		\bibitem{TY} Hirzallah, O., Kittaneh, F.,  Shebrawi, K., \textit{Numerical radius inequalities for certain $2\times 2$ operator matrices}, Integr. Equ. Oper. Theory \textbf{71} (2011), 129--147.
		
		
	
		
		\bibitem{JXU} Jiang, Z. L., Xu, T. T.,  \textit{Norm estimates of $\omega$-circulant operator matrices and isomorphic operators for
			$\omega$-circulant algebra}, Sci. China Math. {\bf 59} (2016), 351--366.
		
			
		\bibitem{JZ} Jiang, Z. L., Zhou, Z. X., \textit{Circulant Matrices}, Chengdu Technology University Publishing Company, Chengdu, 1999

\bibitem{Kaadoud_Moulaharabbi_2024_OAM} Kaadoud, M. C., Moulaharabbi, S.,
\textit{$q$-numerical radius inequalities for product of complex linear bounded operators}, Oper. Matrices., {\bf 18} (2024), 375--388.

        
	\bibitem{KITSAT}  Kittaneh, F.,  Sahoo, S., \textit{On $ \mathbb{A}$-numerical radius equalities and inequalities for certain operator matrices}, Ann. Funct. Anal. \textbf{12} (52) (2021). \url{https://doi.org/10.1007/s43034-021-00137-6}
		
		
		\bibitem{KC} King, C., \textit{Inequalities for trace norms of $2\times 2$ block matrices}, Comm. Math. Phys. {\bf 242} (2003), 531--545.
		
		\bibitem{KC1} King, C., Nathanson, M., \textit{New trace norm inequalities for $2\times 2$ blocks of diagonal matrices}, Linear Algebra Appl. {\bf 389} (2004), 77--93.
		
			\bibitem{Kissin} Kissin, E., \textit{On Clarkson–McCarthy inequalities for $n$-tuples of operators}, Proc. Amer. Math. Soc. {\bf 135} (2007), 2483--2495.


 








\bibitem{LiCK} Li, C. K., Metha P. P., Rodman L., \textit{A generalized numerical range: the range of a constrained
sesquilinear form}, Linear Multilinear Algebr, {\bf 37} (1994), 25–50.

\bibitem{LiCKNakazato} Li, C.K., Nakazato H., \textit{some results on the q-numerical}, Linear Multilinear Algebra, {\bf 43} (1998) 385–409.

\bibitem{LIJL}Li, J., Jiang, Z. L, Lu, F. L., \textit{Determinants, norms, and the spread of circulant matrices with Tribonacci and Generalized Lucas numbers}, Abstr. Appl. Anal. 2014, Article ID 381829, 9 pages (2014)
		
	\bibitem{Marcus}  Marcus, M., Andresen P., \textit{Constrained extrema of bilinear functionals}, Monutsh Math.
{\bf 84} (1977), 219--235.	

\bibitem{MMJ} Moghaddam S. F., Mirmostafaee A. K., Janfada M.,
\textit{$q$-numerical radius inequalities for Hilbert space}, Linear Multi linear Algebra. 2024; {\bf 72}(5):751–763. doi: 10.1080/03081087.2022.2161460

 \bibitem{Nakazato} Nakazato, H., \textit{The $C$-numerical range of a $2\times2$ matrix}, Sci Rep Hirosaki Univ., {\bf 41} (1994), 197--206.
 
        

\bibitem{Patra_Roy_2024_OAM} Patra, A., Roy, F.,
\textit{On the estimation of $q$-numerical radius of Hilbert space operators}, 
Oper. Matrices   {\bf 18} (2024), 343--359.
		 \bibitem{NSD} Rout, N. C., Sahoo, S., Mishra, D., \textit{Some $A$-numerical radius inequalities for semi-Hilbertian space operators}, Linear  Multilinear Algebra,  {\textbf { 69}} (2021), 980-996. 
		
		\bibitem{Nirmal2} Rout, N. C., Sahoo, S., Mishra, D.,
		\textit{On $\mathbb{A}$-numerical radius inequalities for $2\times2$ operator matrices}, Linear  Multilinear Algebra, \textbf{70}(14) (2022), 2672--2692.
		
		





\bibitem{RajiR}  Rajic, R. \textit{A generalized q-numerical range}, Math Commun. {\bf 10} (2005), 31–45.


\bibitem{Sababheh_Moradi_Sahoo_2024} Sababheh, M., Moradi, H. R., Sahoo, S, \textit{Inner product inequalities with applications}, Linear Multilinear Algebra, 2024, DOI: 10.1080/03081087.2024.2312444


 \bibitem{SND1}  Sahoo, S., Rout, N. C., Sababheh,  M.,
		 \textit{Some extended numerical radius inequalities}, Linear  Multilinear Algebra, {\textbf {69}} (2021), 907--920. 
		
		
		
		
	
    \bibitem{Sahoo_Moradi_Sababheh_Acta_2024}	Sahoo, S, Moradi, H. R., Sababheh, M., \textit{Some numerical radius bounds}, Acta Sci. Math. (Szeged), (2024), https://doi.org/10.1007/s44146-024-00150-w

\bibitem{HSMKID} Stanković, H., Krstić, M.,
 and Damnjanović, I. \textit{Some properties of the $q$-numerical radius}, Linear and Multilinear Algebra, (2024) 1–22. https://doi.org/10.1080/03081087.2024.2438927

\bibitem{Tsing} Tsing, N. K., \textit{The constrained bilinear form and the C-numerical ranges}. Linear Algebra Appl.
{\bf 56} (1984), 195–206.

 
	\end{thebibliography}
	
\end{document}